\newtheorem{lemma}{Lemma}[section]
\newtheorem{theorem}[lemma]{Theorem}
\newtheorem{claim}{\ \ Claim}
\newtheorem{proposition}[lemma]{Proposition}
\newtheorem{definition}[lemma]{Definition}
\numberwithin{equation}{section}
\title{\textsf{ Biderivations and commutative post-Lie algebra structures on the Lie algebra $\mathcal{W}(a,b)$}}
\author{
\textsc{Xiaomin Tang\footnote{E-mail: x.m.tang@163.com}}    \\
  \textit{Department of Mathematics},  \textit{Heilongjiang University}\\
  \textit{Harbin 150080, China}
     }
\date{ }
\begin{document}
\maketitle
\begin{quotation}
\noindent\textbf{Abstract.} For $a,b\in \mathbb{C}$, the Lie algebra $\mathcal{W}(a,b)$ is the semidirect product  of the Witt algebra and a module of the intermediate series. In this paper, all biderivations  of $\mathcal{W}(a,b)$ are determined. Surprisingly, these Lie algebras have symmetric (and skewsymmetric)  non-inner biderivations. As an applications, commutative post-Lie algebra structures on $\mathcal{W}(a,b)$ are obtained.

\vskip 5pt

 \noindent \textbf{Keywords}:   biderivation, Lie algebra $\mathcal{W}(a,b)$,  post-Lie algebra

\noindent \textbf{MSC 2010}: 17B05, 17B40, 17B68

  \end{quotation}

  \setcounter{section}{0}
\section{Introduction}\label{intro}

Derivations and generalized derivations (including biderivations) have become more and more powerful tools  in the structure  study of rings and  algebras. Besides their own interests, they have wide applications to other related problems. Recently, there are many efforts on this, see  \cite{Bre1995,Chen2016,Gho2013,Hanw,tang2016,WD1,WD3,WD2}. In his remarkable paper \cite{Bre1995}, Bre\v{s}ar  showed that all biderivations on commutative prime rings are inner biderivations, and determined the biderivations of semiprime rings. The notion of biderivations  was introduced to  Lie algebras in \cite{WD3}. Later   super-biderivations on some super-algebras was introduced in \cite{fan2016,WD2}.  For the last few years many authors computed  only  skew-symmetric biderivations of some Lie (super)algebras due to their close relation to commuting maps, see \cite{Chen2016,fan2016,Hanw,WD1,WD3,WD2}. Non-skew-symmetric biderivations should not be ignored. Actually non-skew-symmetric biderivations can be used to study  post-Lie algebras structures on Lie algebras.
This is addressed only quite recently. For example,  all biderivations of finite-dimensional complex simple Lie algebras,  all biderivations of some W-algebras,  all  biderivations of the twisted Heisenberg-Virasoro algebra,  all  biderivations of Block algebras,  and all the super-biderivations of classical simple Lie superalgebras  were given in
 \cite{tang2016, tang2017, tangli2017, Liuxw, yuant2017} respectively.

  The present paper is to find efficient ways to determine all  biderivations of the Lie algebras $\mathcal{W}(a,b)$ for all $a,b\in\mathbb{C}$, to recover and    generalize results in the papers \cite{Hanw, tang2017,tangli2017}. The Lie algebras $\mathcal{W}(a,b)$ is a class of interesting ones including  many important  Lie algebras as special cases. Let us first recall the Lie algebras $\mathcal{W}(a,b)$.

Throughout the paper, we denote by  $\mathbb{C}$ and $\mathbb{Z}$   the sets of complex numbers and integers, respectively. All vector spaces and algebras are over $\mathbb{C}$. For  $a,b\in \mathbb{C}$, the Lie  algebra $\mathcal{W}(a,b)={\rm span}_{\mathbb{C}}\{L_m, I_m|m\in \mathbb{Z}\}$ has the following brackets:
\begin{eqnarray*}
[L_m,L_n ]=(m-n)L_{m+n}, \ [L_m,I_n]=-(n+a+bm)I_{m+n},  \ [I_m,I_n]=0
\end{eqnarray*}
for all $m,n\in \mathbb{Z}$.
Note that $\mathcal{W}(a, b)$ contains a subalgebra $\mathcal{W} ={ \rm span}_{\mathbb{C}}\{L_m|m\in \mathbb{Z}\}$
isomorphic to the well-known Witt algebra, and that the space ${ \rm span}_{\mathbb{C}}\{I_m|m\in \mathbb{Z}\}$ is  a $\mathcal{W}$-module of the intermediate series. The algebras $\mathcal{W}(a,b)$ were considered in
the mathematical physics \cite{Ovs}.   We know that the universal
central extension of  $\mathcal{W}(0,0)$ is   the so-called
twisted Heisenberg-Virasoro algebra \cite{Arb}, which plays an important role in
the representation theory of toroidal Lie algebras \cite{Billig}. The universal
central extension of $\mathcal{W}(0,-1)$ is  the  Lie algebra $W(2,2)$ whose
representations have been studied in \cite{Zhangw} in terms of vertex operator algebras.

In  \cite{Hanw}, the authors  determined  skew-symmetric biderivations for all  $\mathcal{W}(a,b)$. All biderivations of $\mathcal{W}(0, -1)$ and $\mathcal{W}(0, 0)$  were later obtained in  \cite{tang2017,tangli2017}.     In the present  paper, we shall use the methods in  \cite{Liuxw}  to determine all  biderivations of  $\mathcal{W}(a,b)$   for all  $a,b\in\mathbb{C}$.

 The paper is organizes  as follows. In Section 2, we give general results on biderivations and some lemmas which will be used to our proof. In Section \ref{sec3}, we complete characterize the biderivations without the skew-symmetric condition of the Lie algebra $\mathcal{W}(a,b)$ for all cases of $a,b$. In Section \ref{sec4}, by using the biderivations we characterize the forms of the commutative post-Lie algebra structures on  $\mathcal{W}(a,b)$.

\section{General results on biderivations and some lemmas}\label{sec2}

Let $L$ be a Lie algebra.  Recall that a linear map $\phi: L\rightarrow L$ is called a derivation if   $\phi([x,y])=[\phi(x),y]+[x,\phi(y)]$
for all $x, y\in L$. For any $x\in L$, we have the inner derivation
$${\rm ad} x: L\rightarrow L, y\mapsto {\rm ad} x(y)=[x,y], \forall y\in L.$$    Denote by ${\rm{Der}} (L)$ and by ${\rm{Inn}} (L)$ the space of all derivations and the space of all  inner derivations of $L$ respectively. Now let us recall the definition of  a biderivation of a Lie algebra as follows.

\begin{definition}
A bilinear map $f : L\times L \rightarrow L$ is called a
biderivation of $L$ if it is a derivation with respect to both components. Namely, for all $x,y,z\in L$,
\begin{eqnarray*}
f([x,y],z)=[x,f(y,z)]+[f(x,z),y], \label{2der}\\
f(x,[y,z])=[f(x,y),z]+[y,f(x,z)]. \label{1der}
\end{eqnarray*}
\end{definition}

For any $\lambda\in\mathbb{C}$,  the bilinear map $f: L\times L\rightarrow L$ given by $f(x,y)=\lambda [x,y]$ for all $x, y\in L$
is a biderivation of $L$. Such biderivations are said to be inner.
Denote by ${\rm Bid} (L)$ the set of all biderivations of $L$ which is clearly a vector space. An $f\in {\rm Bid} (L)$ is called symmetric if
$f(x,y) = f(y,x)$ for all $x,y\in L$, and is called skew-symmetric if $f(x,y) = -f(y,x)$ for all$x,y\in L$. Denote by
${\rm Bid}_+(L)$ and ${\rm Bid}_-(L)$ the subspaces of all symmetric biderivations and all skew-symmetric
biderivations on $L$ respectively. If $f\in {\rm Bid} (L)$, then it is easy to see that the bilinear map $f^{\rm op}:L\times L\rightarrow L$ given by $f^{\rm op}(x,y)=f(y,x)$ for all $x,y\in L$ is also a biderivation of $L$. Let $f_-=\frac{1}{2}(f-f^{\rm op})$ and $f_+=\frac{1}{2}(f+f^{\rm op})$. It follows that $f_- \in {\rm Bid}_-(L)$ and $f_+\in {\rm Bid}_+(L)$ if $f\in {\rm Bid}(L)$. In view of $f=f_- + f_+$, the following result established in \cite{Liuxw} is very useful for our later  arguments.

\begin{lemma}\label{zhao}
 Let $L$ be any Lie algebra. Then
${\rm Bid}(L)={\rm Bid}_-(L) \oplus {\rm Bid}_+(L).$
\end{lemma}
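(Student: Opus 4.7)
The plan is to prove the decomposition by explicit construction, using the splitting $f = f_+ + f_-$ already suggested in the discussion preceding the lemma. Concretely, I would first show ${\rm Bid}(L) = {\rm Bid}_+(L) + {\rm Bid}_-(L)$ by verifying that $f_+$ and $f_-$ are genuinely biderivations for every $f \in {\rm Bid}(L)$, and then establish the intersection ${\rm Bid}_+(L) \cap {\rm Bid}_-(L) = \{0\}$ to upgrade the sum to a direct sum.

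The first step is the substantive part: given $f \in {\rm Bid}(L)$, I need $f^{\rm op} \in {\rm Bid}(L)$, because then $f_{\pm} = \tfrac{1}{2}(f \pm f^{\rm op})$ lie in ${\rm Bid}(L)$ by linearity of the space of biderivations, and their symmetry/skew-symmetry is immediate from the definition. To check that $f^{\rm op}$ is a biderivation, I would simply swap the roles of the two arguments in the two identities defining a biderivation. The identity asserting that $f^{\rm op}$ is a derivation in its first slot, namely $f^{\rm op}([x,y],z) = [x,f^{\rm op}(y,z)] + [f^{\rm op}(x,z),y]$, translates after unfolding $f^{\rm op}$ into $f(z,[x,y]) = [x,f(z,y)] + [f(z,x),y]$, which is exactly the second-slot derivation identity for $f$ applied to the triple $(z,x,y)$. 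Similarly, the second-slot derivation identity for $f^{\rm op}$ reduces to the first-slot derivation identity for $f$. Hence $f^{\rm op} \in {\rm Bid}(L)$, and the decomposition $f = f_+ + f_-$ realizes every biderivation as a sum of a symmetric and a skew-symmetric one.

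For the directness of the sum, suppose $f \in {\rm Bid}_+(L) \cap {\rm Bid}_-(L)$. Then for all $x,y \in L$ we simultaneously have $f(x,y) = f(y,x)$ and $f(x,y) = -f(y,x)$, which forces $2f(x,y) = 0$; working over $\mathbb{C}$ (or any field of characteristic $\neq 2$) this gives $f = 0$. Combining this with the additive decomposition yields the desired direct sum ${\rm Bid}(L) = {\rm Bid}_+(L) \oplus {\rm Bid}_-(L)$.

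There is essentially no hard step here; the only point requiring a small computation is checking that the opposite bilinear map $f^{\rm op}$ inherits the biderivation property from $f$, and this is a purely formal verification exploiting the symmetry between the two defining identities in the definition of a biderivation. Since the lemma is attributed to \cite{Liuxw}, I would keep the presentation short and point to that reference after giving the argument.
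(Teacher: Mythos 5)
Your proof is correct and follows essentially the same route as the paper: the discussion preceding the lemma already sets up the splitting $f=f_++f_-$ with $f_\pm=\tfrac{1}{2}(f\pm f^{\rm op})$, and your verification that $f^{\rm op}$ inherits the two derivation identities (by exchanging the roles of the defining equations) is exactly the ``easy to see'' step the paper delegates to \cite{Liuxw}. The added check that ${\rm Bid}_+(L)\cap{\rm Bid}_-(L)=\{0\}$ over a field of characteristic $\neq 2$ completes the directness and matches the intended argument.
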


In view of Lemma \ref{zhao}, to determine ${\rm Bid}(L)$ we only need to determine ${\rm Bid}_-(L)$ and ${\rm Bid}_+(L)$. The following lemmas are easy to verify by direct computations.

\begin{lemma}\label{auto}
Suppose that $L$ and $\widetilde{L}$ are two Lie algebras and $\sigma:L\rightarrow \widetilde{L}$ is an isomorphism of Lie algebras. For any bilinear map $f:L\times L\rightarrow L$, let the bilinear map $f^\sigma: \widetilde{L}\times \widetilde{L}\rightarrow \widetilde{L}$  be
determined by
$$
f^\sigma(\sigma(x),\sigma(y))=\sigma(f(x,y)) \text{ for all } x,y\in L.
$$
Then $f$ is a biderivation of $L$ if and only if $f^\sigma$ is a biderivation of $\widetilde{L}$.
\end{lemma}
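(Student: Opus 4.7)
The plan is to verify the two biderivation identities directly by pushing them through the isomorphism $\sigma$, exploiting two bookkeeping facts: $\sigma$ preserves brackets (i.e. $[\sigma(x),\sigma(y)]=\sigma([x,y])$) and, by the defining relation, $\sigma\circ f=f^\sigma\circ(\sigma\times\sigma)$. Because $\sigma$ is a bijection, any triple in $\widetilde L$ can be written uniquely as $(\sigma(x),\sigma(y),\sigma(z))$, so it is enough to check the biderivation identities on such triples.

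For the forward direction, I would assume $f\in\mathrm{Bid}(L)$ and apply $\sigma$ to the first biderivation equation
\[
f([x,y],z)=[x,f(y,z)]+[f(x,z),y].
\]
Applying $\sigma$ to the left-hand side gives $\sigma(f([x,y],z))=f^\sigma(\sigma([x,y]),\sigma(z))=f^\sigma([\sigma(x),\sigma(y)],\sigma(z))$, using the defining relation of $f^\sigma$ together with the fact that $\sigma$ is a Lie algebra homomorphism. Applying $\sigma$ to the right-hand side and using that $\sigma$ preserves brackets gives $[\sigma(x),\sigma(f(y,z))]+[\sigma(f(x,z)),\sigma(y)]=[\sigma(x),f^\sigma(\sigma(y),\sigma(z))]+[f^\sigma(\sigma(x),\sigma(z)),\sigma(y)]$. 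This is exactly the first biderivation identity for $f^\sigma$ evaluated at $(\sigma(x),\sigma(y),\sigma(z))$. The second identity
\[
f(x,[y,z])=[f(x,y),z]+[y,f(x,z)]
\]
is handled by the same mechanical push-through.

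For the reverse direction, I would simply appeal to symmetry: $\sigma^{-1}:\widetilde L\to L$ is also a Lie algebra isomorphism, and from the definition one checks $(f^\sigma)^{\sigma^{-1}}=f$, so the forward implication applied to $\sigma^{-1}$ and $f^\sigma$ yields that $f$ is a biderivation of $L$ whenever $f^\sigma$ is a biderivation of $\widetilde L$.

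There is no real obstacle here — the statement is a transport-of-structure fact and the only thing to be careful about is to keep the arguments of $\sigma$ and $f^\sigma$ aligned correctly when moving brackets inside or outside $\sigma$. The single identity $\sigma\circ f=f^\sigma\circ(\sigma\times\sigma)$, combined with $\sigma$ being a Lie homomorphism, does all the work in one line per identity.
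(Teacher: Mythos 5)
Your proof is correct, and it is exactly the ``direct computation'' the paper has in mind: the paper offers no details for this lemma beyond remarking that it is easy to verify directly, and your transport-of-structure argument via $\sigma\circ f=f^\sigma\circ(\sigma\times\sigma)$, bracket preservation, and symmetry under $\sigma^{-1}$ fills that in faithfully.
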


\begin{lemma}\label{iso}
Let $k\in \mathbb{Z}$, $a, b\in\mathbb{C}$. Then the linear map $\sigma:\mathcal{W}(a,b)\rightarrow \mathcal{W}(a+k,b)$ given by  $\sigma(L_m)= L_m$, $\sigma(I_m)=I_{m-k}$ is an isomorphism of Lie algebras.
\end{lemma}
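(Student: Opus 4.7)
The plan is to verify that the prescribed linear map $\sigma$ is a bijective linear map that preserves all three bracket relations of $\mathcal{W}(a,b)$, sending them to the corresponding relations in $\mathcal{W}(a+k,b)$. Since $\sigma$ is defined by sending the basis $\{L_m, I_m\mid m\in\mathbb{Z}\}$ of $\mathcal{W}(a,b)$ bijectively to the basis $\{L_m, I_{m-k}\mid m\in\mathbb{Z}\}=\{L_m, I_m\mid m\in\mathbb{Z}\}$ of $\mathcal{W}(a+k,b)$, the bijectivity of the underlying linear map is immediate. So the only substantive content is the bracket check.

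To verify compatibility with brackets, I would treat the three cases separately on basis elements and then invoke bilinearity. First, the $[L_m, L_n]$ bracket is fixed by $\sigma$ because $\sigma(L_m)=L_m$, and both algebras share the same relation $[L_m,L_n]=(m-n)L_{m+n}$. The $[I_m,I_n]=0$ case is trivial since the image side is also zero.

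The only calculation requiring care is the mixed bracket, and the key algebraic identity there is the cancellation
\begin{equation*}
(n-k)+(a+k)+bm \;=\; n+a+bm.
\end{equation*}
Using this, I would compute
\begin{equation*}
\sigma([L_m,I_n]) \;=\; \sigma\bigl(-(n+a+bm)I_{m+n}\bigr) \;=\; -(n+a+bm)\,I_{m+n-k},
\end{equation*}
while on the other hand, applying the bracket of $\mathcal{W}(a+k,b)$ to $\sigma(L_m)=L_m$ and $\sigma(I_n)=I_{n-k}$ yields
\begin{equation*}
[\sigma(L_m),\sigma(I_n)] \;=\; -\bigl((n-k)+(a+k)+bm\bigr)\,I_{m+(n-k)} \;=\; -(n+a+bm)\,I_{m+n-k},
\end{equation*}
which matches. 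Extending bilinearly completes the verification that $\sigma$ is a Lie algebra homomorphism, hence an isomorphism.

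There is no real obstacle here; the statement is a direct computational verification and the whole point is that the shift $n\mapsto n-k$ in the $I$-indices precisely absorbs the parameter shift $a\mapsto a+k$. This lemma will later allow, without loss of generality, restricting attention to a convenient representative of $a$ modulo $\mathbb{Z}$ when classifying biderivations of $\mathcal{W}(a,b)$.
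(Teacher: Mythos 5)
Your verification is correct and is exactly the direct computation the paper has in mind (the lemma is stated there as "easy to verify by direct computations"), with the essential point being the cancellation $(n-k)+(a+k)+bm=n+a+bm$ in the mixed bracket. Nothing further is needed.
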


\begin{lemma}\cite{tang2017}\label{refe} Suppose that $k_i^{(n)},h_i^{(m)}\in\mathbb{C}$ satisfy
\begin{eqnarray*}
(i-m)k_i^{(n)}=(2n-m-i)h_{m-n+i}^{(m)}  \text{ for all } m,n,i\in\mathbb{Z}.
\end{eqnarray*}
Then there exists $\lambda\in\mathbb{C}$ such that $k_i^{(m)}=h_i^{(m)}=\delta_{m,i}\lambda$.
\end{lemma}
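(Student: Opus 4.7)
The strategy is to extract three simple specializations of the hypothesis
\begin{equation*}
(i-m)\,k_i^{(n)} = (2n-m-i)\,h_{m-n+i}^{(m)} \qquad (\star)
\end{equation*}
by choosing $i$ so as to kill one of the two sides, and then to match the surviving diagonal values.

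\emph{Step 1: Vanishing of off-diagonal $h$.} I would first set $i=m$ in $(\star)$. The left hand side becomes $0$, while the right hand side reduces to $2(n-m)\,h_{2m-n}^{(m)}$. Letting $n$ range over $\mathbb{Z}\setminus\{m\}$ and substituting $j=2m-n$, this says $h_j^{(m)}=0$ for every $j\neq m$.

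\emph{Step 2: Vanishing of off-diagonal $k$.} Dually, given $n$ and any $j\neq n$, I would set $m=2n-j$ (so that $n\neq m$) and $i=j$ in $(\star)$. Then $2n-m-i=0$ kills the right hand side, while the left hand side becomes $2(j-n)\,k_j^{(n)}$. Since $j\neq n$, this forces $k_j^{(n)}=0$ for all $j\neq n$.

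\emph{Step 3: Matching the diagonal values.} Finally, I would specialize $(\star)$ with $i=n$, which gives $m-n+i=m$ on the right. The equation becomes $(n-m)\,k_n^{(n)}=(n-m)\,h_m^{(m)}$, so $k_n^{(n)}=h_m^{(m)}$ whenever $n\neq m$. Because $\mathbb{Z}$ contains more than two elements, for any two diagonal entries $k_{n_1}^{(n_1)}, k_{n_2}^{(n_2)}$ I can pick $m\notin\{n_1,n_2\}$ and conclude $k_{n_1}^{(n_1)}=h_m^{(m)}=k_{n_2}^{(n_2)}$, and similarly for the $h$-diagonals; moreover the common $k$-value equals the common $h$-value. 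Calling this common value $\lambda$ and combining with Steps~1 and~2 yields $k_i^{(m)}=h_i^{(m)}=\delta_{m,i}\lambda$.

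No step looks particularly delicate — the argument is just careful bookkeeping of the three vanishing/matching choices of $i$. The only thing to be mildly careful about is Step~3, where one must use that $\mathbb{Z}$ has at least three elements so as to identify all diagonal entries via a common third index.
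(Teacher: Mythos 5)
Your proof is correct and complete: the three specializations ($i=m$ to kill the left side, $i=j$ with $m=2n-j$ to kill the right side, and $i=n$ to match diagonals, using a third index to identify all diagonal values) establish exactly $k_i^{(m)}=h_i^{(m)}=\delta_{m,i}\lambda$. The paper itself only quotes this lemma from \cite{tang2017} without reproducing a proof, and your argument is essentially the same elementary index-substitution argument used there, so nothing needs to be added.
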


\section{ Biderivations of  $\mathcal{W}(a,b)$}\label{sec3}

 In this section, we shall determine all   biderivations of $\mathcal{W}(a,b)$. Thanks to Lemma 2.4, we may assume that $0\le a<1$.
 First we define three  classes of  biderivations for various $\mathcal{W}(a,b)$. The verifications are straightforward.
\begin{definition} \label{taa}
Let $\Omega=(\mu_k)_{  k\in \mathbb{Z}}$ be a sequence which contains only finitely many nonzero entries.

\begin{itemize}

\item The  biderivation $\Psi_\Omega: \mathcal{W}(a,0) \times \mathcal{W}(a,0)\rightarrow \mathcal{W}(a,0)$  is given by
\begin{eqnarray*}\label{fff35}\aligned
 \Psi_\Omega(L_m,L_n)=&\sum_{k \in \mathbb{Z}}\mu_k I_{m+n+k}, \\
\Psi_\Omega(L_m,I_n)=&\Psi_\Omega(I_n,L_m)=\Psi_\Omega(I_m,I_n)=0 \text{\ for all } m,n\in\mathbb{Z}.\endaligned
\end{eqnarray*}

\item The  biderivation $\Upsilon^a_\Omega: \mathcal{W}(a,1) \times \mathcal{W}(a,1)\rightarrow \mathcal{W}(a,1)$  is determined by
\begin{eqnarray*}\label{fff36}\aligned
 \Upsilon^a_\Omega(L_m,L_n)=&\sum_{k \in \mathbb{Z}}(m+n+k+a)\mu_k I_{m+n+k}, \\
\Upsilon^a_\Omega(L_m,I_n)=&\Upsilon^a_\Omega(I_n,L_m)=\Upsilon^a_\Omega(I_m,I_n)=0 \text{\ for all } m,n\in\mathbb{Z}.\endaligned
\end{eqnarray*}

\item The   biderivation $\Theta^a_\mu: \mathcal{W}(a,-1) \times \mathcal{W}(a,-1)\rightarrow \mathcal{W}(a,-1)$ for $a\in \mathbb{Z}$ is determined by
\begin{eqnarray*}\label{fff37}\aligned
 \Theta^a_\mu(L_m,L_n)&=(m-n)\mu I_{m+n-a}, \\
\Theta^a_\mu(L_m,I_n)&=\Theta^a_\mu(I_n,L_m)=\Theta^a_\mu(I_m,I_n)=0 \text{\ for all } m,n\in\mathbb{Z}.\endaligned
\end{eqnarray*}
\end{itemize}
\end{definition}

Note that $\Psi_\Omega,\Upsilon^a_\Omega$ are symmetric and $\Theta^a_\mu$ is skew-symmetric, and   they  are  non-inner if they are nonzero.  We know that  the spaces
${\rm Bid}(\mathcal{W}(0,-1))$, ${\rm Bid}(\mathcal{W}(0,0))$ and ${\rm Bid}_-(\mathcal{W}(a,b))$ were determined in
\cite{tang2017}, \cite{tangli2017} and \cite{Hanw}, respectively:

(1) If $f\in {\rm Bid}(\mathcal{W}(0,0))$, then there exist $\lambda \in \mathbb{C}$ and a sequence $\Omega=(\mu_k)_{  k\in \mathbb{Z}}$  which contains only finitely many nonzero entries such that $$f(x,y)=\lambda[x,y]+\Psi_\Omega(x,y) \text{\ for all } x,y\in \mathcal{W}(0,0);$$

(2) If $f\in {\rm Bid}(\mathcal{W}(0,-1))$, then  there exist $\lambda,\mu \in \mathbb{C}$ such that
$$f(x,y)=\lambda[x,y]+\Theta^0_\mu(x,y) \text{\ for all } x,y\in \mathcal{W}(0,-1);$$

(3) If $f\in {\rm Bid}_-(\mathcal{W}(a,b))$,  then there exist $\lambda,\mu \in \mathbb{C}$ such that
\begin{eqnarray*}
f(x,y)=
\begin{cases}
\lambda[x,y]+\Theta^a_\mu(x,y), & \text{if} \ a\in \mathbb{Z}, b=-1, \\
\lambda[x,y], & \text{otherwise}, \\
\end{cases}
\end{eqnarray*}
for all $x,y\in \mathcal{W}(a,b)$.

Now we present our main result in this section.

\begin{theorem}\label{posttheo}
 Any   biderivation  $f$ of $\mathcal{W}(a,b)$ is of the form
\begin{eqnarray*}f(x,y)=
\begin{cases}
\lambda[x,y]+\Psi_\Omega(x,y), & \text{if} \ b=0, \\
\lambda[x,y]+\Upsilon^a_\Omega(x,y), & \text{if} \ b=1, \\
\lambda[x,y]+\Theta^a_\mu(x,y), & \text{if} \ a\in \mathbb{Z}, b=-1, \\
\lambda[x,y], & \text{otherwise}. \\
\end{cases}
\end{eqnarray*}
for some  $\lambda,\mu \in \mathbb{C}$ and a sequence $\Omega=(\mu_k)_{  k\in \mathbb{Z}}$  which contains only finitely many nonzero entries.
\end{theorem}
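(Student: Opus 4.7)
The plan is to apply Lemma \ref{zhao} to split every biderivation $f$ of $\mathcal{W}(a,b)$ as $f = f_- + f_+$ with $f_-$ skew-symmetric and $f_+$ symmetric, quote the description of ${\rm Bid}_-(\mathcal{W}(a,b))$ from \cite{Hanw} (item (3) above), and devote the rest of the argument to ${\rm Bid}_+(\mathcal{W}(a,b))$. By Lemma \ref{iso} we may assume $0 \le a < 1$, and since every inner biderivation is skew-symmetric the task reduces to showing that any symmetric biderivation $f$ must equal $\Psi_\Omega$ when $b = 0$, equal $\Upsilon^a_\Omega$ when $b = 1$, and vanish otherwise.

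Fix such a symmetric $f$ and expand
\begin{eqnarray*}
f(L_m,L_n) &=& \sum_i \alpha_i^{(m,n)} L_i + \sum_i \beta_i^{(m,n)} I_i, \\
f(L_m,I_n) &=& \sum_i \gamma_i^{(m,n)} L_i + \sum_i \delta_i^{(m,n)} I_i, \\
f(I_m,I_n) &=& \sum_i \epsilon_i^{(m,n)} L_i + \sum_i \zeta_i^{(m,n)} I_i.
\end{eqnarray*}
The projection $\pi : \mathcal{W}(a,b) \to \mathcal{W}$ onto the Witt subalgebra is a Lie homomorphism (it kills every bracket that lands in the span of the $I_k$'s), so $\pi \circ f|_{\mathcal{W} \times \mathcal{W}}$ is a biderivation of the Witt algebra. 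All biderivations of $\mathcal{W}$ are inner and hence skew-symmetric, so the symmetry of $f$ collapses the Witt component of $f(L,L)$ and forces every $\alpha_i^{(m,n)} = 0$. Next I would feed the identity $f([I_m,I_n],L_p) = [I_m,f(I_n,L_p)] + [f(I_m,L_p),I_n]$, whose left side vanishes, into coefficient comparison; combined with the mixed identity $f(L_p,[L_m,I_n]) = [f(L_p,L_m),I_n] + [L_m,f(L_p,I_n)]$ and variability in $p$, this forces $\gamma_i^{(m,n)} = \delta_i^{(m,n)} = 0$. A parallel analysis based on $f([L_p,I_m],I_n)$ and $f([I_m,I_n],I_p)$ kills $\epsilon_i^{(m,n)}$ and $\zeta_i^{(m,n)}$.

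What remains is $f(L_m,L_n) = \sum_i \beta_i^{(m,n)} I_i$, symmetric in $m,n$. Substituting into $f([L_m,L_n],L_p) = [L_m,f(L_n,L_p)] + [f(L_m,L_p),L_n]$ and reading off the $I_j$-coefficient yields the functional equation
\[
(m-n)\beta_j^{(m+n,p)} = (n+a+b(j-n))\beta_{j-n}^{(m,p)} - (j-m+a+bm)\beta_{j-m}^{(n,p)}.
\]
The main obstacle is to solve this recursion case-by-case in $b$. My plan is to specialize $p = 0$ together with $m = 0$ or $n = 0$, and then recast the resulting relation in the form required by Lemma \ref{refe}; this pins $\beta_j^{(m,n)}$ to depend on $m,n,j$ only through $j - (m+n)$, up to an explicit linear factor in $j$ dictated by $b$. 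The two linear-in-$j$ factors on the right of the recursion conspire to cancel precisely when $b = 0$ (giving a free finitely supported family $(\mu_k)$, hence $\Psi_\Omega$) or when $b = 1$ (giving the same family weighted by $(m+n+k+a)$, hence $\Upsilon^a_\Omega$); for every other value of $b$ enough independent constraints survive to force $\beta \equiv 0$. Combining the resulting description of ${\rm Bid}_+$ with the known ${\rm Bid}_-$ from \cite{Hanw} produces the dichotomy claimed in the theorem.
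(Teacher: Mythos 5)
Your global structure (split $f=f_-+f_+$ via Lemma \ref{zhao}, quote \cite{Hanw} for $f_-$, reduce to $0\le a<1$ by Lemmas \ref{iso} and \ref{auto}, and classify the symmetric part) matches the paper's, and your idea of killing the Witt component of $f(L_m,L_n)$ by projecting onto the quotient $\mathcal{W}(a,b)/\mathrm{span}\{I_k\}\cong\mathcal{W}$ is a legitimate shortcut: it replaces the paper's route through the derivation classification (Lemmas \ref{innerLie} and \ref{referee}) for that particular step, although ``all biderivations of the Witt algebra are inner'' is itself left unproved in your write-up (it does follow from Lemma \ref{refe}, so this part is fixable).

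The genuine gap is everything after that. First, your key recursion is mis-stated: the coefficient of $\beta_{j-n}^{(m,p)}$ should be $(j-n+a+bn)$, not $(n+a+b(j-n))$; the correct identity is
\[
(m-n)\beta_j^{(m+n,p)}=(j-n+a+bn)\,\beta_{j-n}^{(m,p)}-(j-m+a+bm)\,\beta_{j-m}^{(n,p)}.
\]
Second, and more seriously, solving this recursion (and the parallel ones that are supposed to force $f(L_m,I_n)=f(I_m,I_n)=0$) is the entire content of the paper's Propositions \ref{pro1}, \ref{pro2}, \ref{pro3} together with the cited classifications for $(0,0)$ and $(0,-1)$, and your plan for it --- specialize $p=0$, $m=0$ or $n=0$ and ``recast in the form required by Lemma \ref{refe}'' --- cannot work as stated: Lemma \ref{refe} treats relations of the shape $(i-m)k_i^{(n)}=(2n-m-i)h_{m-n+i}^{(m)}$ and concludes that the coefficients are Kronecker deltas, so it is the right tool for the $L$-components but not for the $I$-component recursion above, whose solution space is infinite-dimensional exactly when $b\in\{0,1\}$. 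The claim that the factors ``conspire to cancel precisely when $b=0$ or $b=1$'' and that otherwise ``enough constraints survive to force $\beta\equiv 0$'' is precisely what has to be proved, and the difficulty sits in the integer cases where structure constants $n+a+bm$ vanish ($a\in\mathbb{Z}$, in particular $(a,b)=(0,0),(0,1),(0,2),(0,-1)$) --- exactly where the paper needs separate, delicate arguments (the index sets $J_1,J_2,J_3$, the special treatment of $(0,2)$, and the appeal to \cite{tang2017,tangli2017}). Likewise, your elimination of $\gamma,\delta,\epsilon,\zeta$ ``by variability in $p$'' is asserted rather than derived and does not address these vanishing-coefficient cases. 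So the proposal is a plausible outline with a nice alternative reduction for the Witt part, but the case-by-case analysis that constitutes the heart of the theorem is missing, and the one lemma you invoke to carry it out does not apply to that step.
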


The proof of Theorem \ref{posttheo} will be completed later. We first recall and establish  several auxiliary results.

\begin{lemma}\label{innerLie} \cite{Shoulan}
The derivation of $\mathcal{W}(a,b)$ is determined by the following:
\begin{eqnarray*}
{\rm{Der}} (\mathcal{W}(a,b))&=&\left\{
\begin{array}{lll}
{\rm{Inn}} (\mathcal{W}(a,b)) \oplus \mathbb{C} D_{1}\oplus \mathbb{C} D_{2}^{0,0} \oplus \mathbb{C} D_3, & (a,b)=(0,0);\\
{\rm{Inn}} (\mathcal{W}(a,b)) \oplus \mathbb{C} D_{1}\oplus \mathbb{C} D_{2}^{0,1}, & (a,b)=(0,1);\\
{\rm{Inn}} (\mathcal{W}(a,b)) \oplus \mathbb{C} D_{1}\oplus \mathbb{C} D_{2}^{0,2}, & (a,b)=(0,2);\\
{\rm{Inn}} (\mathcal{W}(a,b)) \oplus \mathbb{C} D_{1}, & \text{otherwise},\\
\end {array}
\right.
\end{eqnarray*}
where the derivations $D_{1},
D_{2}^{0,0},
D_{2}^{0,1},
D_{2}^{0,2},
D_{3}$
are defined as follows
for all $m\in \mathbb{Z}$,
\begin{eqnarray*}
D_{1}(L_m)=0, \ D_{1}(I_m)=I_m, \\
D_{2}^{0,0}(L_m)=(m-1)I_m, \ D_{2}^{0,0}(I_m)=0, \\
D_{2}^{0,1}(L_m)=(m^2-m)I_m, \ D_{2}^{0,1}(I_m)=0, \\
D_{2}^{0,2}(L_m)=m^3I_m, \ D_{2}^{0,2}(I_m)=0, \\
D_{3}(L_m)=mI_m, \ D_{3}(I_m)=0.
\end{eqnarray*}
\end{lemma}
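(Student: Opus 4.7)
The plan is a standard graded-algebra calculation. The algebra $\mathcal{W}(a,b)$ carries a $\mathbb{Z}$-grading $\mathcal{W}(a,b)=\bigoplus_{n\in\mathbb{Z}}\mathcal{W}(a,b)_n$ with $\mathcal{W}(a,b)_n=\mathbb{C} L_n\oplus \mathbb{C} I_n$, since every bracket in the presentation is homogeneous. For any derivation $D$, the projection of $D(L_m)$ and $D(I_m)$ onto $\mathcal{W}(a,b)_{m+k}$ defines a derivation $D_k$ homogeneous of degree $k$, and $D=\sum_{k\in\mathbb{Z}}D_k$ in the sense that only finitely many $D_k$ act nontrivially on any given basis vector. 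It therefore suffices to classify homogeneous derivations.

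For each $D_k$ with $k\ne 0$, write $D_k(L_m)=A_m L_{m+k}+B_m I_{m+k}$ and $D_k(I_m)=C_m L_{m+k}+E_m I_{m+k}$. Applying the derivation identity to each of the three families of brackets $[L_m,L_n]$, $[L_m,I_n]$, $[I_m,I_n]$ produces a finite system of linear relations on the coefficients; solving them (using $k\ne 0$ to invert expressions such as $k$ or $a+bk$) shows $D_k={\rm ad}(x_k)$ for a unique $x_k\in \mathcal{W}(a,b)_k$. After subtracting $\sum_{k\ne 0}{\rm ad}(x_k)$ we may assume $D$ is homogeneous of degree $0$.

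So write $D(L_m)=\alpha_m L_m+\beta_m I_m$ and $D(I_m)=\gamma_m L_m+\delta_m I_m$. The $L$-component of the $[L_m,L_n]$-identity gives $\alpha_m=cm$ for some constant $c$; the $L$-component of the $[L_m,I_n]$-identity forces $\gamma_m=0$; its $I$-component yields $\delta_m=cm+d$. Subtracting $-c\,{\rm ad}(L_0)$ eliminates the $\alpha_m L_m$ term and collapses $\delta_m$ to the single constant $d-ca$, which is then absorbed by subtracting $(d-ca)D_1$; the identity from $[I_m,I_n]=0$ is automatic. We are reduced to $D(L_m)=\beta_m I_m$, $D(I_m)=0$.

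The only remaining unknown is $(\beta_m)$, which by the residual $I$-component of the $[L_m,L_n]$-identity satisfies
\[(m-n)\beta_{m+n}=(m+a+bn)\beta_m-(n+a+bm)\beta_n\quad\text{for all }m,n\in\mathbb{Z}.\]
Setting $m=0$ gives $a\beta_n=bn\beta_0$, so $a\ne 0$ immediately forces $\beta_n\equiv 0$, covering the ``otherwise'' branch. The main obstacle is the case $a=0$, where $\beta_m=bm$ is the inner solution coming from ${\rm ad}(I_0)$, and one must show that any additional solution forces $b\in\{0,1,2\}$, yielding exactly the listed $D_2^{0,0}$, $D_3$, $D_2^{0,1}$, $D_2^{0,2}$. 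I would specialize $n=1$ to express $\beta_{m+1}$ in terms of $\beta_m$ and $\beta_1$, then impose compatibility with the $n=2$ specialization; the resulting identity in $b$ has integer roots exactly $b=0,1,2$, and the trial solutions $\beta_m\in\{1,m,m(m-1),m^3\}$ verified by direct substitution exhaust the remaining non-inner derivations.
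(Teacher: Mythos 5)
The paper itself gives no proof of this lemma: it is quoted from \cite{Shoulan}, and the surrounding section works under the normalization $0\le a<1$ supplied by Lemma \ref{iso}. So your outline has to stand on its own, and while the graded-reduction strategy (split off homogeneous components, kill the nonzero degrees, reduce degree $0$ to a single sequence $\beta_m$) is indeed the standard route, two of its decisive steps are wrong or missing. First, the $m=0$ specialization of your functional equation is miscomputed: it gives $a\beta_n=(a+bn)\beta_0$, not $a\beta_n=bn\beta_0$, so $a\neq0$ does \emph{not} force $\beta\equiv0$. In fact $\beta_n=\lambda(a+bn)$ solves the equation for every $(a,b)$, and it is precisely $\lambda\,{\rm ad}(I_0)$ (since $[I_0,L_m]=(a+bm)I_m$), an inner derivation you never subtracted — you only removed $c\,{\rm ad}(L_0)$ and a multiple of $D_1$. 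The conclusion of that branch survives once you mod out ${\rm ad}(I_0)$, but the step as written is false. Second, the heart of the lemma is the case $a=0$: one must show that for $b\notin\{0,1,2\}$ every solution is a multiple of $\beta_m=m$ (which is a solution for \emph{all} $b$, inner exactly when $b\neq0$), and that for $b=0,1,2$ the solution space is enlarged exactly by $\{1,\ m\}$, $\{m^2-m\}$, $\{m^3\}$ respectively. You only assert that a compatibility condition ``has integer roots exactly $b=0,1,2$''; but $b$ is an arbitrary complex parameter, so integer roots settle nothing, and verifying the four displayed derivations does not bound the dimension of the solution space. This computation is the actual content of the lemma and is not carried out.

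The nonzero-degree step also needs more care than ``invert $k$ or $a+bk$''. The relevant quantities are the eigenvalues of ${\rm ad}\,L_0$ on the degree-$k$ component, namely $k$ and $k+a$ (and $k-a$ for the $I\!\to\!L$ block), so the argument that every homogeneous derivation of degree $k\neq0$ is inner requires $a\notin\mathbb{Z}\setminus\{0\}$, i.e.\ exactly the normalization $0\le a<1$ that you never invoke. Without it the claim — and the lemma's ``otherwise'' line read literally — fails: in $\mathcal{W}(1,0)$ the map $L_m\mapsto mI_{m-1}$, $I_m\mapsto0$ is a non-inner derivation of degree $-1$ (it is $D_3$ of $\mathcal{W}(0,0)$ transported through the shift isomorphism of Lemma \ref{iso}). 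So you should either impose the normalization at the outset and then transport the statement by Lemma \ref{auto}--\ref{iso}, or restrict the special cases to ``$a\in\mathbb{Z}$, $b\in\{0,1,2\}$ up to shift''; as written, the degree-$k$ reduction silently divides by quantities that can vanish.
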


\begin{lemma}\label{referee}
Suppose that $f$ is a biderivation of $ \mathcal{W}(a,b)$ with $(a,b)\neq (0,0)$.  Then there are linear maps $\phi^{a,b}$ and $\psi^{a,b}$ from $ \mathcal{W}(a,b)$ into itself such that
\begin{eqnarray*}
f(x,y)&=&\rho_1^{a,b}(x) D_1(y)+\rho^{a,b}_2(x)D^{a,b}_2(y)+[\phi^{a,b}(x), y]\nonumber\\
&=&\theta^{a,b}_1(y) D_1(x)+\theta^{a,b}_2(y)D^{a,b}_2(x)+[x, \psi^{a,b}(y)] \label{abcd1} \label{aaa2}
\end{eqnarray*}
for all $x,y\in \mathcal{W}(2,2)$, where $\rho^{a,b}_1, \rho^{a,b}_2$ and $\theta^{a,b}_1,\theta^{a,b}_2$ are linear complex valued functions on $\mathcal{W}(a,b)$, and $D_1, D_2^{a,b}$ are given by Lemma \ref{innerLie}, note that $D_2^{a,b}=0$ when $(a,b)\not\in\{(0,1),(0,2)\}$.
\end{lemma}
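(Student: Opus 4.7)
The plan is to fix one slot of $f$, convert the biderivation into a family of ordinary derivations parametrized by the other slot, and then invoke Lemma \ref{innerLie}. Concretely, for each fixed $x \in \mathcal{W}(a,b)$ the map $D_x := f(x, \cdot)$ is a derivation of $\mathcal{W}(a,b)$: this is exactly the content of the biderivation axiom on the second argument. Since $(a,b) \neq (0,0)$, Lemma \ref{innerLie} expresses $D_x$ as
\begin{equation*}
D_x = \rho_1^{a,b}(x)\, D_1 + \rho_2^{a,b}(x)\, D_2^{a,b} + \mathrm{ad}\,\phi^{a,b}(x),
\end{equation*}
with the convention that $D_2^{a,b}$ is zero whenever $(a,b) \notin \{(0,1),(0,2)\}$. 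The outer derivation $D_3$, which only appears in the excluded case $(a,b) = (0,0)$, plays no role here.

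To promote this pointwise decomposition to a statement about linear maps, I need the scalars $\rho_1^{a,b}(x)$, $\rho_2^{a,b}(x)$ and the element $\phi^{a,b}(x)$ to be uniquely determined by $x$. The scalars are unique because the direct sum in Lemma \ref{innerLie} is direct. The element $\phi^{a,b}(x)$ is unique once the center of $\mathcal{W}(a,b)$ is trivial, since then $\mathrm{ad}$ is injective. To check centerlessness, I write a general element $z = \sum c_m L_m + \sum d_m I_m$ of finite support and impose $[L_k, z] = 0$ for every $k \in \mathbb{Z}$; the defining brackets force $c_m(k-m) = 0$ and $d_m(m + a + bk) = 0$ for all $k, m$. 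The first relation gives $c_m = 0$ for all $m$, and the second gives $d_m = 0$ for all $m$ unless $b = 0$ and $-a \in \mathbb{Z}$. Under the standing reduction $0 \le a < 1$ (permitted by Lemma \ref{iso}) together with $(a,b) \neq (0,0)$, this exceptional possibility is excluded, so the center is trivial.

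Combining bilinearity of $f$ with the uniqueness just established, the assignments $x \mapsto \rho_i^{a,b}(x)$ and $x \mapsto \phi^{a,b}(x)$ inherit $\mathbb{C}$-linearity from the linear dependence of $D_x$ on $x$; this gives the first displayed equality of the lemma. The second equality follows by an entirely symmetric argument: the biderivation axiom on the first argument says that, for each fixed $y$, the map $x \mapsto f(x, y)$ is itself a derivation of $\mathcal{W}(a,b)$, and the same decomposition produces linear functionals $\theta_1^{a,b}, \theta_2^{a,b}$ and a linear map $\psi^{a,b}$ of the required form.

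The only real obstacle is the centerlessness verification, which governs the passage from pointwise to global linearity of $\phi^{a,b}$ and $\psi^{a,b}$; once this case analysis is in hand, the result is essentially a repackaging of Lemma \ref{innerLie} together with the definition of biderivation.
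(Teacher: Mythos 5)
Your proposal is correct and follows essentially the same route as the paper: fix one argument of $f$ to obtain a derivation in the other, apply Lemma \ref{innerLie} (with $D_2^{a,b}=0$ outside $\{(0,1),(0,2)\}$), and use bilinearity to get linearity of the coefficient functionals and of $\phi^{a,b}$, $\psi^{a,b}$. The only difference is that you explicitly verify centerlessness of $\mathcal{W}(a,b)$ under the normalization $0\le a<1$, $(a,b)\neq(0,0)$ to justify uniqueness of the inner part; the paper leaves this point implicit, so your added check is a correct refinement rather than a different argument.
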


\begin{proof}
It is easy to see that, for the biderivation $f$ of $\mathcal{W}(a,b)$ and a fixed element $x\in \mathcal{W}(a,b)$, the linear map $\phi_x(y)=f(x,y)$ is a derivation of $\mathcal{W}(a,b)$. Notice that $(a,b)\neq (0,0)$, by Lemma \ref{innerLie}, there are complex-valued functions $\rho^{a,b}_1, \rho^{a,b}_2$ on $\mathcal{W}(a,b)$ and a linear map $\phi^{a,b}$ from $\mathcal{W}(a,b)$ into itself such that $\phi_x =\rho^{a,b}_1(x) D_1+\rho^{a,b}_2(x) D_2^{a,b}+{\rm ad}\phi^{a,b}(x)$, where we provide that $D_2^{a,b}=0$ when $(a,b)\not\in\{(0,1),(0,2)\}$.
Namely, $f(x,y)=\rho^{a,b}_1(x) D_1(y)+\rho^{a,b}_2(x) D_2^{a,b}(y)+[\phi^{a,b}(x), y]$. Because $f$ is bilinear, the maps $\rho^{a,b}_1, \rho^{a,b}_2$ are linear. Similarly, the map $\psi_z(y)=f(y,z)$ is a derivation of $\mathcal{W}(a,b)$, and there are linear complex valued functions $\theta^{a,b}_1,\theta^{a,b}_2$ on $\mathcal{W}(a,b)$ and a linear map $\psi^{a,b}$ from $\mathcal{W}(a,b)$ into itself such that
\begin{eqnarray*}f(x,y)&=&\theta^{a,b}_1(y) D_1(x)+\theta^{a,b}_2(y)D_2^{a,b}(x)+{\rm ad}(-\psi^{a,b} (y))(x)\nonumber\\
&=&\theta^{a,b}_1(y) D_1(x)+\theta^{a,b}_2(y)D_2^{a,b}(x)+[x, \psi^{a,b}(y)].\label{aaa3}
\end{eqnarray*}
 The proof is completed.
\end{proof}

\begin{lemma}\label{ref} Let $f$ be a biderivation of $\mathcal{W}(a,b)$ with $(a,b)\neq (0,0)$, and $\phi^{a,b}$, $\psi^{a,b}$, $\rho^{a,b}_i$, $\theta^{a,b}_i, i=1,2$ be given as in Lemma \ref{referee}. Then the following equations hold.
\begin{eqnarray}
 f(L_m,L_n)&=&\rho^{a,b}_2(L_m)D^{a,b}_2(L_n)+[\phi^{a,b} (L_m),L_n]\nonumber \\
&=& \theta^{a,b}_2(L_n)D^{a,b}_2(L_m)+[L_m,\psi^{a,b}(L_n)], \label{aa14}
\end{eqnarray}
\begin{eqnarray}
f(L_m,I_n)&=&\rho^{a,b}_1(L_m)I_n+[\phi^{a,b} (L_m),I_n] \nonumber \\
&=& \theta^{a,b}_2(I_n)D^{a,b}_2(L_m) +[L_m,\psi^{a,b} (I_n)], \label{aa15}
\end{eqnarray}
\begin{eqnarray}
f(I_n,L_m)&=&\rho^{a,b}_2(I_n)D^{a,b}_2(L_m)+[\phi^{a,b} (I_n),L_m] \nonumber \\
&=&\theta^{a,b}_1(L_m)I_n+[I_n,\psi^{a,b}(L_m)],\label{aa16}
\end{eqnarray}
\begin{eqnarray}
f(I_m,I_n)&=&\rho^{a,b}_1(I_m)I_n+[\phi^{a,b} (I_m),I_n] \nonumber \\
&=&\theta^{a,b}_1(I_n)I_m+[I_m,\psi^{a,b}(I_n)]. \label{aa17}
\end{eqnarray}
\end{lemma}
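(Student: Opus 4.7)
The plan is a direct unpacking: take the two universal formulas from Lemma \ref{referee}, namely
\[
f(x,y)=\rho^{a,b}_1(x)D_1(y)+\rho^{a,b}_2(x)D^{a,b}_2(y)+[\phi^{a,b}(x),y]
\]
and
\[
f(x,y)=\theta^{a,b}_1(y)D_1(x)+\theta^{a,b}_2(y)D^{a,b}_2(x)+[x,\psi^{a,b}(y)],
\]
specialize $(x,y)$ to each of the four basis-type pairs $(L_m,L_n)$, $(L_m,I_n)$, $(I_n,L_m)$, $(I_m,I_n)$, and then use Lemma \ref{innerLie} to evaluate the derivations $D_1$ and $D^{a,b}_2$ on the relevant basis vectors.

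The only ingredients needed are the explicit values recorded in Lemma \ref{innerLie}: $D_1(L_m)=0$, $D_1(I_m)=I_m$, while $D^{a,b}_2(I_m)=0$ in every case (it is either zero as a map, or $D_{2}^{0,0},D_{2}^{0,1},D_{2}^{0,2}$, all of which annihilate the $I_m$'s), and $D^{a,b}_2(L_m)$ lies in the span of the $I_m$'s. Thus, for instance, in the first formula applied to $(L_m,L_n)$ the $\rho^{a,b}_1$-term vanishes because $D_1(L_n)=0$, yielding the first line of (\ref{aa14}); in the second formula applied to $(L_m,L_n)$ the $\theta^{a,b}_1$-term vanishes because $D_1(L_m)=0$, yielding the second line. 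The same mechanism kills exactly one of the $\rho$-terms or $\theta$-terms in each of the other three cases: for $(L_m,I_n)$ the first formula loses the $\rho^{a,b}_2$-term (because $D^{a,b}_2(I_n)=0$) and the second loses the $\theta^{a,b}_1$-term; for $(I_n,L_m)$ the first formula loses the $\rho^{a,b}_1$-term and the second loses the $\theta^{a,b}_2$-term; for $(I_m,I_n)$ the first formula loses the $\rho^{a,b}_2$-term and the second loses the $\theta^{a,b}_2$-term.

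After these substitutions, equations (\ref{aa14})--(\ref{aa17}) read off immediately. There is no real obstacle: the statement is a bookkeeping corollary of Lemmas \ref{innerLie} and \ref{referee}, and the proof is one line per case. The only subtlety worth flagging is that the claim is asserted uniformly for all $(a,b)\neq(0,0)$, including those where $D^{a,b}_2$ is taken to be zero by convention; in that situation the $\rho^{a,b}_2$- and $\theta^{a,b}_2$-terms on the right-hand sides of (\ref{aa14})--(\ref{aa17}) simply vanish and the identities remain correct, so no case split is required.
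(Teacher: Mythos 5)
Your proposal is correct and is exactly the argument the paper intends: the paper's proof is the one-line remark that the lemma follows from Lemmas \ref{innerLie} and \ref{referee}, i.e.\ the same substitution of basis pairs into the two formulas of Lemma \ref{referee} together with the values $D_1(L_m)=0$, $D_1(I_m)=I_m$, $D_2^{a,b}(I_m)=0$ that you carry out. Your explicit case-by-case bookkeeping (including the observation that the convention $D_2^{a,b}=0$ outside $(0,1),(0,2)$ requires no separate treatment) is just a spelled-out version of that.
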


\begin{proof}
It will follow by Lemmas \ref{innerLie} and \ref{referee}.
\end{proof}

Let $f$ be a biderivation of $\mathcal{W}(a,b)$. In view of Lemma \ref{referee}, we can assume that
\begin{eqnarray}
\phi^{a,b}(L_n)=\sum_{i\in \mathbb{Z}}a_i^{a,b}{(n)}L_i+\sum_{i\in \mathbb{Z}}b_i^{a,b}{(n)}I_i,\label{ll1}   \\
\psi^{a,b}(L_n)=\sum_{i\in \mathbb{Z}}c_i^{a,b}{(n)}L_i+\sum_{i\in \mathbb{Z}}d_i^{a,b}{(n)}I_i, \label{ll2}\\
\phi^{a,b}(I_n)=\sum_{i\in \mathbb{Z}}p_i^{a,b}{(n)}L_i+\sum_{i\in \mathbb{Z}}q_i^{a,b}{(n)}I_i, \label{dd10}\\
\psi^{a,b} (I_n)=\sum_{i\in \mathbb{Z}}s_i^{a,b}{(n)}L_i+\sum_{i\in \mathbb{Z}}r_i^{a,b}{(n)}I_i, \label{eee1}
\end{eqnarray}
where $a_i^{a,b}{(n)}, b_i^{a,b}{(n)}, c_i^{a,b}{(n)}, d_i^{a,b}{(n)},p_i^{a,b}{(n)}, q_i^{a,b}{(n)}, r_i^{a,b}{(n)}, s_i^{a,b}{(n)}\in \mathbb{C}$.

According to Lemma \ref{zhao} and \cite{Hanw}, we only need to determine the sets ${\rm Bid}_+(\mathcal{W}(a,b))$. That is, we
have to study the symmetric biderivations $of \mathcal{W}(a,b)$. It will be  divided into several cases based on the values of $a,b$.
For $f\in {\rm Bid} (\mathcal{W}(a,b))$, the notions given in (\ref{ll1})-(\ref{eee1}) will be applied always.

\subsection{The case for $(a,b)=(0,1)$}
\

\ \ \ \ By the definition, $\mathcal{W}(0,1)$ has the following Lie brackets
\begin{eqnarray*}
[L_m,L_n ]=(m-n)L_{m+n},\ [L_m,I_n ]=-(m+n)I_{m+n},\  [I_m,I_n ]=0.
\end{eqnarray*}

In this case, we shall prove the following result.

\begin{proposition}\label{pro1}
Let $f\in \rm Bid_+(\mathcal{W}(0,1))$. Then there is a sequence $\Omega=(\mu_k)_{  k\in \mathbb{Z}}$  which contains only finitely many nonzero entries such that
$
f(x,y)=\Upsilon_\Omega^{0}(x,y)
$
for all $x,y\in \mathcal{W}(0,1)$.
\end{proposition}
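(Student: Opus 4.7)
The plan is to combine Lemma \ref{ref} with the symmetry hypothesis $f=f^{\rm op}$ and the bracket structure of $\mathcal{W}(0,1)$, eliminating components of $\phi^{0,1}$ and $\psi^{0,1}$ until only the $L\otimes L\to I$ block remains, and then solving the resulting functional equation. The strategy breaks naturally into (I) a bookkeeping phase showing that $f$ vanishes on the $L\otimes I$, $I\otimes L$ and $I\otimes I$ blocks, and (II) a functional-equation phase that identifies $f|_{L\otimes L}$ with $\Upsilon_\Omega^0$.

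In phase (I), I would first extract the $L$-coefficients from the two expressions for $f(L_m,L_n)$ in (\ref{aa14}); the resulting identity has precisely the shape of Lemma \ref{refe}, yielding $a_i^{0,1}(n)=c_i^{0,1}(n)=\delta_{n,i}\lambda$ for a common $\lambda\in\mathbb{C}$. The contribution of $\lambda$ to $f(L_m,L_n)$ is the skew expression $\lambda(m-n)L_{m+n}$, so symmetry forces $\lambda=0$, and hence $\phi^{0,1}(L_n),\psi^{0,1}(L_n)\in {\rm span}\{I_i:i\in\mathbb{Z}\}$. This makes the first expression in (\ref{aa15}) collapse to $f(L_m,I_n)=\rho_1^{0,1}(L_m)I_n$; matching the $L$-component of the second expression (namely $\sum_i s_i^{0,1}(n)(m-i)L_{m+i}$) against zero kills $s_i^{0,1}(n)$, and then comparing $I_j$-coefficients while varying $m,n,j$ over integers outside the exceptional loci $\{0,m,n\}$ eliminates $r_i^{0,1}(n)$, $\rho_1^{0,1}(L_m)$ and $\theta_2^{0,1}(I_n)$ in turn. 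A parallel analysis of (\ref{aa16}) gives $\phi^{0,1}(I_n)=0$ and $f(I_n,L_m)=0$, while (\ref{aa17}) collapses to $\rho_1^{0,1}(I_m)I_n=\theta_1^{0,1}(I_n)I_m$ and so forces $f(I_m,I_n)=0$.

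Phase (II) starts by writing $f(L_m,L_n)=\sum_k F_k(m,n)I_k$ and applying the biderivation identity to the triple $(L_m,L_n,L_p)$, which yields
\begin{equation*}
(m-n)F_k(m+n,p) \;=\; k\,F_{k-n}(m,p) - k\,F_{k-m}(n,p).
\end{equation*}
Setting $m=0$ gives $F_k(n,p)=\frac{k}{k-n}F_{k-n}(0,p)$ for $k\neq n$; the analogous second-slot identity together with the symmetry $F_k(m,0)=F_k(0,m)$ shows that $F_j(0,p)/j$ depends only on $j-p$ for $j\neq 0$. Declaring $\mu_c$ to be this common value at level $c$ produces $F_j(0,p)=j\mu_{j-p}$ and hence $F_k(m,n)=k\mu_{k-m-n}$ whenever $k\neq m$, with the same formula extending to $k\neq n$ by symmetry. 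The residual diagonal case $k=m=n\neq 0$ is closed by specializing $(m,n)\mapsto(2k,-k)$ in the functional equation, which reads $3k\,F_k(k,k)=3k^2\mu_{-k}$; the case $k=0$ is automatic since a direct inspection of (\ref{aa14}) at the $I_0$-coefficient gives $F_0\equiv 0$.

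Finally, $f(L_0,L_0)=\sum_k k\mu_k I_k$ must belong to $\mathcal{W}(0,1)$, so the sequence $\Omega=(\mu_k)_{k\in\mathbb{Z}}$ has finite support. Reindexing $k=m+n+\ell$ rewrites the formula as $f(L_m,L_n)=\sum_\ell (m+n+\ell)\mu_\ell I_{m+n+\ell}=\Upsilon_\Omega^0(L_m,L_n)$, which together with phase (I) yields $f=\Upsilon_\Omega^0$. I expect the main obstacle to be the diagonal degeneracy $k=m=n$ in phase (II): every route from the functional equation to $F_k(m,n)$ passes through a $(k-m)$ or $(k-n)$ denominator that vanishes there, so one must choose auxiliary integer parameters carefully (as with $(2k,-k)$) to close the argument without dividing by zero.
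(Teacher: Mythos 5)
Your proposal is correct, and it genuinely departs from the paper exactly where the paper works hardest. Your phase (I) is essentially the paper's route: Lemma \ref{referee} together with Lemma \ref{refe} and symmetry kills the $L$-components of $\phi^{0,1}(L_n),\psi^{0,1}(L_n)$, and the $L\otimes I$, $I\otimes L$, $I\otimes I$ blocks are then eliminated coefficient by coefficient, just as in Claims 2 and 3. The divergence is in the $L\otimes L$ block: the paper stays inside the derivation-decomposition, proving $\rho^{0,1}_2(L_m)=\theta^{0,1}_2(L_n)=0$ via (\ref{ef12})--(\ref{ef18}) and the auxiliary index sets $J_1,J_2,J_3$, and then solving for the coefficients $b_i^{0,1},d_i^{0,1}$; you instead feed the triple $(L_m,L_n,L_p)$ into the biderivation axiom, obtain the functional equation $(m-n)F_k(m+n,p)=kF_{k-n}(m,p)-kF_{k-m}(n,p)$ for $f(L_m,L_n)=\sum_kF_k(m,n)I_k$, and solve it using symmetry. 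I checked the key steps: the $m=0$ specialization, the identification $F_j(0,p)=j\mu_{j-p}$, the extension $F_k(m,n)=k\mu_{k-m-n}$ off the diagonal, and the closing specialization $(2k,-k,k)$, which indeed yields $3kF_k(k,k)=kF_{2k}(2k,k)-kF_{-k}(-k,k)=3k^2\mu_{-k}$; all are sound, and this bypasses the $D_2^{0,1}$-bookkeeping entirely (in fact $F_0\equiv 0$ already follows from the functional equation at $k=0$, so even the appeal to (\ref{aa14}) there is optional). What the paper's longer route buys is the explicit form (\ref{ps=0}) of $\phi^{0,1},\psi^{0,1}$ on the $L_n$'s, but the weaker information you retain suffices for your later claims. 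Two small points to tighten in a write-up: the relation $\frac{F_{k-n}(0,p)}{k-n}=\frac{F_{k-p}(0,n)}{k-p}$ is only available for pivots $k\notin\{0,n,p\}$, so when the forced pivot $k=j+p'$ is $0$ you should pass through an intermediate pair (and define $\mu_0$ by such a chain); and finite support of $\Omega$ from $f(L_0,L_0)=\sum_kk\mu_kI_k$ controls $\mu_k$ only for $k\neq 0$, which is enough since $\mu_0$ is a single entry.
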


\begin{proof}
The proof will be completed by verifying the following three claims.

\begin{claim}\label{claim 1}
There is a sequence $\Omega=(\mu_k)_{  k\in \mathbb{Z}}$  which contains only finitely many nonzero entries such that
$$
f(L_m,L_n)=\sum_{k\in \mathbb{Z}}(m+n+k)\mu_k I_{m+n+k}, \text{ for all } m,n\in \mathbb{Z}.
$$
\end{claim}
By Lemma \ref{referee}, the linear maps $\phi^{0,1}, \psi^{0,1}: \mathcal{W}(0,1)\rightarrow \mathcal{W}(0,1)$ satisfy (\ref{aa14}).
This, together with (\ref{ll1}) and (\ref{ll2}), yields that $f(L_m,L_n)$ is equal to
\begin{eqnarray}
&&\rho^{0,1}_2(L_m)(n^2-n)I_n+\sum_{i\in \mathbb{Z}}(i-n)a_i^{0,1}{(m)}L_{n+i}+\sum_{i\in \mathbb{Z}}(i+n)b_i^{0,1}{(m)}I_{n+i} \nonumber\\
 &=& \theta^{0,1}_2(L_n)(m^2-m)I_m+ \sum_{j\in \mathbb{Z}}(m-j)c_j^{0,1}{(n)}L_{m+j}-\sum_{j\in \mathbb{Z}}(m+j)d_j^{0,1}{(n)}I_{m+j}\nonumber\\
 &=& \theta^{0,1}_2(L_n)(m^2-m)I_m+ \sum_{i\in \mathbb{Z}}(2m-n-i)c_{n-m+i}^{0,1}{(n)}L_{n+i}  \label{abab1}\\
 && \ \ \ \ -\sum_{i\in \mathbb{Z}}(i+n)d_{n-m+i}^{0,1}{(n)}I_{n+i}. \nonumber
\end{eqnarray}
Because $f$ is symmetric which implies  $f(L_m, L_n)= f(L_n, L_m)$, comparing both sides of the above equations, we obtain
\begin{equation}\label{ac111}
(i-n)a_i^{0,1}{(m)}=(n-i)c_i^{0,1}{(m)}=(2m-n-i)c_{n-m+i}^{0,1}{(n)}, \ \text{ for all } m,n,i\in \mathbb{Z}
\end{equation}
and for any $m\neq n$ with $i\neq 0, m-n$,
\begin{eqnarray}
(i+n)b_i^{0,1}{(m)}=-(i+n)d_i^{0,1}{(m)}=-(i+n)d_{n-m+i}^{0,1}{(n)}, \label{ef11}\\
\rho^{0,1}_2(L_m)(n^2-n)+nb_{0}^{0,1}{(m)}=-nd_{n-m}^{0,1}{(n)},\label{ef12} \\
\theta^{0,1}_2(L_n)(m^2-m)-md_{0}^{0,1}{(n)}=mb_{m-n}^{0,1}{(m)}. \label{ef13}
\end{eqnarray}
By Lemma \ref{refe} with (\ref{ac111}) and (\ref{ef11}), one has $a_i^{0,1}{(m)}=c_i^{0,1}{(m)}=0$ and
\begin{equation}\label{ac211}
b_i^{0,1}{(m)}=-d_i^{0,1}{(m)}, \ d_j^{0,1}{(m)}=d_{n-m+j}^{0,1}{(n)}\ \text{ for all } i\neq 0, j\neq 0, -n, m-n.
\end{equation}
Denote $J_1=\{-2,1,3\}$, $J_2=\{-3,3,0\}$ and $J_3=\{-3,1,4\}$. In view of (\ref{ac211}), we have
\begin{equation}\label{abcd22}
\begin{cases}
 d_{-2-m}^{0,1}{(-2)}=d_{3-m}^{0,1}{(3)}, & \text{ if } m\notin J_1,  \\
 d_{3-m}^{0,1}{(3)}=d_{-3-m}^{0,1}{(-3)},  & \text{ if } m\notin J_2, \\
 d_{-3-m}^{(-3)}=d_{4-m}^{(4)},   & \text{ if } m\notin J_3.
 \end{cases}
\end{equation}
Take $n=-2,3,-3$ and $4$ in (\ref{ef12}), respectively, we obtain
\begin{eqnarray}
6\rho^{0,1}_2(L_m)-2b_0^{0,1}{(m)}=2d_{-2-m}^{0,1}{(-2)}, & \text{if } m\neq -2,\label{ef15} \\
6\rho^{0,1}_2(L_m)+3b_0^{0,1}{(m)}=-3d_{3-m}^{0,1}{(3)}, &\text{if }m\neq 3,\label{ef16} \\
12\rho^{0,1}_2(L_m)-3b_0^{0,1}{(m)}=3d_{-3-m}^{0,1}{(-3)}, &\text{if }m\neq -3,\label{ef17} \\
12\rho^{0,1}_2(L_m)+4b_0^{0,1}{(m)}=-4d_{4-m}^{0,1}{(4)}, &\text{if }m\neq 4.\label{ef18}
\end{eqnarray}
According to (\ref{abcd22}), it follows by (\ref{ef15}) and (\ref{ef16}) that $\rho^{0,1}_2(L_m)=0$ if $m\notin J_1$;  by (\ref{ef16}) and (\ref{ef17}) that $\rho^{0,1}_2(L_m)=0$ if $m\notin J_2$ and by (\ref{ef17}) and (\ref{ef18}) that $\rho^{0,1}_2(L_m)=0$ if $m\notin J_3$. Note that $J_1\cap J_2\cap J_3=\emptyset$, we get $\rho^{0,1}_2(L_m)=0$ for all $m\in \mathbb{Z}$. Similarly, according to (\ref{ef13}) we have $\theta^{0,1}_2(L_n)=0$ for all $n\in \mathbb{Z}$.  This, together with (\ref{abab1}), implies that for any $m,n,i\in \mathbb{Z}$,
\begin{eqnarray}
(i+n)b_i^{0,1}{(m)}=-(i+n)d_i^{0,1}{(m)}=-(i+n)d_{n-m+i}^{0,1}{(n)}. \label{ef111}
\end{eqnarray}
It is not difficult to see by (\ref{ef111}) that $d_{m+k}^{0,1}{(m)}=d_{n+k}^{0,1}{(n)}=-b_{m+k}^{0,1}{(m)}$ for all $m,n,k\in \mathbb{Z}$. Denote $d_{k}^{0,1}{(0)}=-\mu_k$, we have $d_{m+k}^{0,1}{(m)}=-b_{m+k}^{0,1}{(m)}=-\mu_k$ for all $m,k\in \mathbb{Z}$. All these with (\ref{abab1}) yield that
\begin{eqnarray*}
f(L_m,L_n)&=&\sum_{i\in \mathbb{Z}}(i+n)b_i^{0,1}{(m)}I_{n+i}\\
&=&\sum_{k\in \mathbb{Z}}(m+k+n)b_{m+k}^{0,1}{(m)}I_{n+m+k}\\
&=&\sum_{k\in \mathbb{Z}}(m+k+n)\mu_k I_{n+m+k}.
\end{eqnarray*}
Note that the sequence $\Omega \doteq(\mu_k)_{  k\in \mathbb{Z}}$  which contains only finitely many nonzero entries for which the above equation makes sense.
This completes the proof of the claim. In addition, the above proof also implies
\begin{equation} \label{ps=0}
\phi^{0,1}(L_n)=\psi^{0,1}(L_n)=\sum_{k\in \mathbb{Z}} \mu_k I_{n+k}, \ \text{ for all } n\in \mathbb{Z}.
\end{equation}

\begin{claim}\label{claim 2}
$f(L_m,I_n)=0$ for all $m,n\in \mathbb{Z}$.
\end{claim}
By Lemma \ref{referee}, the linear maps $\phi^{0,1}, \psi^{0,1}: \mathcal{W}(0,1)\rightarrow \mathcal{W}(0,1)$ satisy (\ref{aa15}) and (\ref{aa16}).
This, together with (\ref{dd10}), (\ref{eee1}) and (\ref{ps=0}), yields  that $f(L_m,I_n)$ and $f(I_n, L_m)$ are of the following forms respectively:
\begin{eqnarray}
\rho^{0,1}_1(L_m)I_n&=& \theta^{0,1}_2(I_n)(m^2-m)I_m+\sum_{j\in \mathbb{Z}}(m-i)s_i^{0,1}{(n)}L_{m+i}\nonumber\\
&&\ \ \ \ -\sum_{i\in \mathbb{Z}}(m+i)r_i^{0,1}{(n)}I_{m+i},\label{abc1} \\
\theta^{0,1}_1(L_m)I_n&=&\rho^{a,b}_2(I_n)(m^2-m)I_m+\sum_{j\in \mathbb{Z}}(i-m)p_i^{0,1}{(n)}L_{i+m} \nonumber\\
&&\ \ \ +\sum_{i\in \mathbb{Z}}(m+i)q_i^{0,1}{(n)}I_{m+i}. \label{abc2}
\end{eqnarray}
For any $m\neq n$, from (\ref{abc1}) we get
\begin{eqnarray}
&& (m-i)s_i^{0,1}{(n)}=0, \label{170918-1}\\
&& nr_{n-m}^{0,1}{(n)}=\rho^{0,1}_1(L_m), \label{170918-2} \\
&& \theta^{0,1}_2(I_n)(m^2-m)-mr_0^{0,1}{(n)}=0. \label{170918-3}
\end{eqnarray}
It is easy to see by (\ref{170918-1}) that $s_i^{0,1}{(n)}=0$ for all $i,n\in \mathbb{Z}$.  Take $m=2,3,4$ and $5$ in (\ref{170918-3}), respectively, then we have
\begin{eqnarray}
2\theta^{0,1}_2(I_n)-2r_0^{0,1}{(n)}=0, \ \text{if }n\neq 2, \label{nneq2} \\
6\theta^{0,1}_2(I_n)-3r_0^{0,1}{(n)}=0, \ \text{if }n\neq 3, \label{nneq3}\\
12\theta^{0,1}_2(I_n)-4r_0^{0,1}{(n)}=0, \ \text{if }n\neq 4,\label{nneq4} \\
20\theta^{0,1}_2(I_n)-5r_0^{0,1}{(n)}=0, \ \text{if }n\neq 5.\label{nneq5}
\end{eqnarray}
It follows by (\ref{nneq2}) and (\ref{nneq3}) that $\theta^{0,1}_2(I_n)=0$ if $n\notin \{2,3\}$, and by (\ref{nneq4}) and (\ref{nneq5}) that $\theta^{0,1}_2(I_n)=0$ if $n\notin \{4,5\}$. In view of $\{2,3\}\cap \{4,5\}=\emptyset$, we obtain $\theta^{0,1}_2(I_n)=0$ for all $n\in \mathbb{Z}$.
In addition, by letting $n=0$ in (\ref{170918-2}) we obtain $\rho^{0,1}_1(L_m)=0$ for all $m\neq 0$. But we also have $\rho^{0,1}_1(L_0)=0$ by taking
$m=n=0$ in (\ref{abc1}). Now, it is already shown that $f(L_m,I_n)=0$ for all $m,n\in \mathbb{Z}$, which proves the claim.

On other hand, the results above
together with (\ref{abc1}) yield $(m+i)r_i^{0,1}{(n)}=0$ for any integers $m,n,i$. This implies $r_i^{0,1}{(n)}=0$ for all $n,i\in \mathbb{Z}$. Similarly, by (\ref{abc2}) we have $p_i^{0,1}{(n)}=q_i^{0,1}{(n)}=0$ for all $i,n\in \mathbb{Z}$. Hence, we get the following useful result:
\begin{equation} \label{pss=0}
\phi^{0,1}(I_n)=\psi^{0,1}(I_n)=0, \ \text{ for all } n\in \mathbb{Z}.
\end{equation}

\begin{claim}\label{claim 3}
$f(I_m,I_n)=0$ for all $m,n\in \mathbb{Z}$.
\end{claim}

By Lemma \ref{referee}, the linear maps $\phi^{0,1}, \psi^{0,1}: \mathcal{W}(0,1)\rightarrow \mathcal{W}(0,1)$ satisfy (\ref{aa17}).
This, together with (\ref{pss=0}), yields
\begin{eqnarray*}
f(I_m,I_n)=\rho^{0,1}_1(I_m)I_n=\theta^{0,1}_1(I_n)I_m.
\end{eqnarray*}
Let $m,n$ run all integers with $m\neq n$ in the above equation, then $\rho^{0,1}_1(I_m)=\theta^{0,1}_1(I_n)=0$ and the conclusion is proved.

Finally, the proof of the proposition is completed by Claims \ref{claim 1}, \ref{claim 2} and \ref{claim 3}.
\end{proof}

\subsection{The case for $(a,b)=(0,2)$}
\

\ \ \ \ By the definition, $\mathcal{W}(0,2)$ has the following Lie brackets
\begin{eqnarray*}
[L_m,L_n ]=(m-n)L_{m+n}, [L_m,I_n ]=-(2m+n)I_{m+n},  [I_m,I_n ]=0.
\end{eqnarray*}

In this case, we shall prove the following result.

\begin{proposition}\label{pro2}
Let $f\in \rm Bid_+(\mathcal{W}(0,2))$. Then $f(x,y)=0$ for all $x,y\in \mathcal{W}(0,2)$.
\end{proposition}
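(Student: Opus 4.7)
The plan is to mimic the three-claim structure used in the proof of Proposition 3.8 for the case $(a,b) = (0,1)$, substituting $D_2^{0,2}(L_n) = n^3 I_n$ in place of $(n^2-n)I_n$ and using the bracket $[L_m, I_n] = -(2m+n) I_{m+n}$. By Lemma 3.5 combined with the expansions of $\phi^{0,2}(L_n)$ and $\psi^{0,2}(L_n)$ introduced before Section 3.1, I would first write $f(L_m, L_n)$ in its two natural forms and match coefficients of $L$-terms and of $I$-terms separately.

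Comparing the $L$-coefficients yields an identity of the shape
\[(i-n)\,a_i^{0,2}(m) = (2m-n-i)\,c_{n-m+i}^{0,2}(n),\]
which falls under the hypotheses of Lemma 2.5 and forces $a_i^{0,2}(m) = c_i^{0,2}(m) = 0$ for all $i, m$. Matching $I$-coefficients, after isolating the $\rho_2^{0,2}$ and $\theta_2^{0,2}$ contributions at the special indices $i = 0$ and $i = m-n$, will produce an equation of the form
\[(2n+i)\,b_i^{0,2}(m) = -(m+n+i)\,d_{n-m+i}^{0,2}(n)\]
for generic indices. Imposing the symmetry $f(L_m, L_n) = f(L_n, L_m)$ and evaluating at a carefully chosen finite collection of small integer triples $(m, n, i)$ --- in the spirit of the $J_1, J_2, J_3$ trick used for the $(0,1)$ case --- will force successively $\rho_2^{0,2}(L_m) = \theta_2^{0,2}(L_m) = 0$ and then $b_i^{0,2}(m) = d_i^{0,2}(m) = 0$, so that $\phi^{0,2}(L_n) = \psi^{0,2}(L_n) = 0$ and hence $f(L_m, L_n) = 0$.

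With $\phi^{0,2}(L_n) = \psi^{0,2}(L_n) = 0$ established, the remaining two claims $f(L_m, I_n) = 0$ and $f(I_m, I_n) = 0$ follow by arguments essentially identical to Claims 3.10 and 3.11 of the previous proposition, since those arguments used only the cubic (resp.\ $D_2^{0,1}$) form of the extra derivation together with the control of $\phi, \psi$ on the $L$-part. In particular, evaluation at enough distinct integers $m$ eliminates $\theta_2^{0,2}(I_n)$ and $\rho_2^{0,2}(I_n)$, and the final claim is immediate once the mixed block vanishes.

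The main obstacle is the $L \times L$ step. In the $(0,1)$ case the matching of $I$-coefficients had the \emph{same} prefactor $(n+i)$ on both sides, which permitted the translation-invariant family of non-inner biderivations $\Upsilon_\Omega^0$. In the $(0,2)$ case the prefactors $(2n+i)$ and $(m+n+i)$ are genuinely different, and it is precisely this asymmetry that rules out any non-trivial symmetric biderivation. The delicate point is choosing the special triples $(m,n,i)$ that simultaneously decouple the $\rho_2^{0,2}, \theta_2^{0,2}$ scalars from the $b, d$ coefficients and fully exploit the prefactor mismatch; once done, the system is seen to be over-determined and every symmetric biderivation collapses to zero, completing the proof.
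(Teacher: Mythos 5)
Your outline does follow the same route as the paper: the same three-claim structure, the same double expansion of $f(L_m,L_n)$ via Lemma \ref{referee}, Lemma \ref{refe} to kill the $L$-coefficients, and an evaluation-at-small-integers trick to kill the scalar functionals; you also correctly identify the real mechanism, namely that for $b=2$ the two $I$-prefactors $(2n+i)$ and $(m+n+i)$ disagree, so no analogue of $\Upsilon^a_\Omega$ can survive. However, the decisive part of the argument is only asserted, not carried out, and it is not a routine transplant of the $(0,1)$ computation. The relation $(2n+i)b_i^{0,2}(m)=-(2n+i)d_i^{0,2}(m)=-(m+n+i)d_{n-m+i}^{0,2}(n)$ is \emph{not} of the form covered by Lemma \ref{refe}, so ``the system is over-determined and collapses'' is precisely the statement that has to be proved. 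In the paper this is done by first exploiting the degenerate index $i=-2n$, where the left-hand prefactor vanishes, to get $d_k^{0,2}(n)=0$ for $n\neq 0$, $k\neq 0,-2n$ (see (\ref{ac26})), then treating $n=0$ and the coefficient $d_0^{0,2}(0)$ separately (see (\ref{ac266})), and only afterwards running the evaluation trick with $m=1,2,4,5$, whose exceptional sets are now $\{1,3\},\{2,6\},\{4,12\},\{5,15\}$ (coming from the condition $n\neq m,3m$), not the sets $J_1,J_2,J_3$ of the $(0,1)$ case.

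Related to this, your proposed order of elimination (``first $\rho_2^{0,2}=\theta_2^{0,2}=0$, then $b=d=0$'') does not work as stated: the equations isolating $\rho_2^{0,2}$ and $\theta_2^{0,2}$ at the special indices $i=0$ and $i=m-n$ (the analogues of (\ref{ef126})--(\ref{ef136})) still contain the unknowns $d_{n-m}^{0,2}(n)$, $b_{m-n}^{0,2}(m)$ and $d_0^{0,2}(n)$, and in the $(0,2)$ case there is no translation-invariance identity for the $d$'s (the analogue of the $(0,1)$ relation $d_j(m)=d_{n-m+j}(n)$ fails exactly because of the prefactor mismatch). So one must first prove the partial vanishing of the $b,d$ coefficients and only then decouple the scalars, which is the opposite of the order you propose. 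The remaining two claims ($f(L_m,I_n)=0$ and $f(I_m,I_n)=0$) do go through essentially as in the $(0,1)$ case, as you say, but with the adjusted coefficient $-(m+n)r_{n-m}^{0,2}(n)$ in place of $nr_{n-m}^{0,1}(n)$ and correspondingly different exceptional values of $m$; spelling these out would be needed for a complete proof.
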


\begin{proof}
We shall complete the proof by verifying the following three claims.

\begin{claim}\label{claim 4}
$f(L_m,L_n)=0$ for all $m,n\in \mathbb{Z}$.
\end{claim}
By Lemma \ref{referee}, there are linear maps $\phi^{0,2}, \psi^{0,2}: \mathcal{W}(0,2)\rightarrow \mathcal{W}(0,2)$ satisfy (\ref{aa14}).
This, together with (\ref{ll1}) and (\ref{ll2}), yields that $f(L_m,L_n)$ is equal to
\begin{eqnarray}
&& \rho^{0,2}_2(L_m)n^3I_n+\sum_{i\in \mathbb{Z}}(i-n)a_i^{0,2}{(m)}L_{n+i}+\sum_{i\in \mathbb{Z}}(2n+i)b_i^{0,2}{(m)}I_{n+i} \nonumber\\
 &=& \theta^{0,2}_2(L_n)m^3I_m+ \sum_{j\in \mathbb{Z}}(m-j)c_j^{0,2}{(n)}L_{m+j}-\sum_{j\in \mathbb{Z}}(2m+j)d_j^{0,2}{(n)}I_{m+j} \nonumber\label{abab199}\\
 &=& \theta^{0,2}_2(L_n)m^3I_m+ \sum_{i\in \mathbb{Z}}(2m-n-i)c_{n-m+i}^{0,2}{(n)}L_{n+i} \\
 &&\ \ \ \ \ -\sum_{i\in \mathbb{Z}}(m+n+i)d_{n-m+i}^{0,2}{(n)}I_{n+i}. \nonumber
\end{eqnarray}
Because $f$ is symmetric which implies  $f(L_m, L_n)= f(L_n, L_m)$, comparing both sides of the above equations, we obtain
\begin{equation}\label{ac1}
(i-n)a_i^{0,2}{(m)}=(n-i)c_i^{0,2}{(m)}=(2m-n-i)c_{n-m+i}^{0,2}{(n)}, \ \text{ for all } m,n,i\in \mathbb{Z}
\end{equation}
and for $m\neq n$ and $i\neq 0, m-n$,
\begin{eqnarray}
(2n+i)b_i^{0,2}{(m)}=-(2n+i)d_i^{0,2}{(m)}=-(m+n+i)d_{n-m+i}^{0,2}{(n)}, \label{ef116}\\
\rho^{0,2}_2(L_m)n^3+2nb_{0}^{0,2}{(m)}=-(m+n)d_{n-m}^{0,2}{(n)},\label{ef126} \\
\theta^{0,2}_2(L_n)m^3-2md_{0}^{0,2}{(n)}=(m+n)b_{m-n}^{0,2}{(m)}. \label{ef136}
\end{eqnarray}
It is easy to see by Lemma \ref{refe} with (\ref{ac1}) and (\ref{ef116}) that $a_i^{0,2}{(m)}=c_i^{0,2}{(m)}=0$ and
\begin{equation}\label{ac2}
b_i^{0,2}{(m)}=-d_i^{0,2}{(m)}, \ \text{ for all } i\neq 0.
\end{equation}
By taking $i=-2n$ in (\ref{ef116}), we get $(n-m)d_{-m-n}^{0,2}{(n)}=0$ if $-2n\neq 0, m-n$. It follows that
\begin{equation}\label{ac26}
d_k^{0,2}{(n)}=0, \ \text{ for all } n\neq 0, k\neq 0, -2n.
\end{equation}
Let $m=0$ and $i=n\neq 0$ in (\ref{ef116}), then one has $d_n^{0,2}(0)=\frac{2}{3}d_{2n}^{0,2}(n)$. Note that $2n\neq 0, -2n$,  by
(\ref{ac26}) we have $d_{2n}^{0,2}(n)=0$, which yields $d_n^{0,2}(0)=0$ for all $n\neq 0$. By taking $n=0$ and $m\neq 0$ in (\ref{ef136}),
notice that $b_m^{0,2}(m)=-d_m^{0,2}(m)=0$ according to (\ref{ac2}) and (\ref{ac26}), we obtain $\theta^{0,2}_2(L_0)m^3-2md_{0}^{0,2}{(0)}=0$. This implies $d_{0}^{0,2}{(0)}=0$. Now, we have shown that
\begin{equation}\label{ac266}
b_s^{0,2}{(0)}=d_s^{0,2}{(0)}=b_k^{0,2}{(n)}=d_k^{0,2}{(n)}=0, \ \text{ for all } k,s,n\in \mathbb{Z} \ \text{with}\ k\neq 0, -2n.
\end{equation}
Then, from (\ref{ac266}) we know $b_{m-n}^{0,2}{(m)}=0$ if $m-n\neq 0, -2m$. This, together with (\ref{ef136}), deduces
$$
\theta^{0,2}_2(L_n)m^3-2md_{0}^{0,2}{(n)}=0, \text{ for all } n\neq m,3m.
$$
Take $m=1,2,4$ and $5$ in the above equation, respectively, we obtain
\begin{eqnarray}
\theta^{0,2}_2(L_n)-2d_0^{0,2}{(n)}=0, &\text{if } n\neq 1,3,\label{ef156} \\
8\theta^{0,2}_2(L_n)-4d_0^{0,2}{(n)}=0, &\text{if }n\neq 2,6,\label{ef166} \\
64\theta^{0,2}_2(L_n)-8d_0^{0,2}{(n)}=0, &\text{if }n\neq 4,12,\label{ef176} \\
125\theta^{0,2}_2(L_n)-10d_0^{0,2}{(n)}=0, &\text{if }n\neq 5,15.\label{ef186}
\end{eqnarray}
It follows by (\ref{ef156}) and (\ref{ef166}) that $\theta^{0,2}_2(L_n)=0$ if $n\notin \{1,2,3,6\}$,  by (\ref{ef176}) and (\ref{ef186}) that $\theta^{0,2}_2(L_n)=0$ if $n\notin \{4,5,12,15\}$. Note that $\{1,2,3,6\}\cap \{4,5,12,15\}=\emptyset$, we get $\theta^{0,2}_2(L_n)=0$ for all $n\in \mathbb{Z}$. Similarly, according to (\ref{ef126}) we have $\rho^{0,2}_2(L_m)=0$ for all $m\in \mathbb{Z}$.
This, together with (\ref{abab1}), implies that (\ref{ef116}) holds for any $m,n,i\in \mathbb{Z}$. By letting $i=0$ and $n=-m\neq 0$ in (\ref{ef116}),
we have $d_0^{0,2}(m)=0$ for all $m\neq 0$. This, together with (\ref{ac266}), gives that $b_k^{0,2}{(n)}=d_k^{0,2}{(n)}=0$ for all $k,n\in \mathbb{Z}$ with
$k\neq -2n$ if $n\neq 0$.
This completes the proof of the claim. In addition, the above proof also implies
\begin{equation} \label{ps=06}
\phi^{0,2}(L_n)=\psi^{0,2}(L_n)=0, \ \text{ for all } n\in \mathbb{Z}.
\end{equation}

\begin{claim}\label{claim 5}
$f(L_m,I_n)=0$ for all $m,n\in \mathbb{Z}$.
\end{claim}
By Lemma \ref{referee}, the linear maps $\phi^{0,2}, \psi^{0,2}: \mathcal{W}(0,2)\rightarrow \mathcal{W}(0,2)$ satisfy (\ref{aa15}) and (\ref{aa16}).
This, together with (\ref{ps=06}) , (\ref{dd10}) and (\ref{eee1}), yields that $f(L_m,I_n)$ and $f(I_n, L_m)$ are of the following forms respectively:
\begin{eqnarray}
\rho^{0,2}_1(L_m)I_n&=& \theta^{0,2}_2(I_n)m^3I_m+\sum_{i\in \mathbb{Z}}(m-i)s_i^{0,2}{(n)}L_{m+i}\nonumber \\
&&\ \ \ \ -\sum_{i\in \mathbb{Z}}(2m+i)r_i^{0,2}{(n)}I_{m+i},\label{abc16} \\
\theta^{0,2}_1(L_m)I_n&=&\rho^{0,2}_2(I_n)m^3I_m+\sum_{i\in \mathbb{Z}}(i-m)p_i^{0,2}{(n)}L_{m+i}\nonumber \\
&&\ \ \ \ +\sum_{i\in \mathbb{Z}}(2m+i)q_i^{0,2}{(n)}I_{m+i}.\label{abc26}
\end{eqnarray}
 From (\ref{abc16}) we get $(m-i)s_i^{0,2}{(n)}=0$ for all $m,n,i\in\mathbb{Z}$ which implies $s_i^{0,2}{(n)}=0$ for any integer $i,n\in \mathbb{Z}$,
 and by (\ref{abc16}) we also obtain for any $m\neq n$,
\begin{eqnarray}
&& (2m+i)r_i^{0,2}{(n)}=0,\ i\neq 0, n-m, \label{170918-16}\\
&& -(m+n)r_{n-m}^{0,2}{(n)}=\rho^{0,2}_1(L_m), \label{170918-26} \\
&& \theta^{0,2}_2(I_n)m^3-2mr_0^{0,2}{(n)}=0. \label{170918-36}
\end{eqnarray}
By letting $m=-n\neq n$ in (\ref{170918-26}), we have $\rho^{0,2}_1(L_m)=0$ for all $m\neq 0$. In addition, by taking $m=n=0$ in (\ref{abc16}) we have $\rho^{0,2}_1(L_0)=0$ and then $f(L_m,I_n)=\rho^{0,2}_1(L_m)I_n=0$ for all $m,n\in \mathbb{Z}$. This completes the proof of the claim.
On the other hand, in view of (\ref{170918-36}), in a similar way to Claim \ref{claim 4} we get $\theta^{0,2}_2(I_n)=r_0^{0,2}{(n)}=0$ for all $n\in \mathbb{Z}$.  These results with (\ref{170918-36}) yield $r_i^{0,2}{(n)}=0$ for all $i,n\in \mathbb{Z}$. Similarly, by (\ref{abc26}) we obtain
$\rho^{0,2}_2(I_n)=p_i^{0,2}{(n)}=q_i^{0,2}{(n)}=\theta^{0,2}_1(L_n)=0$ for any integer $n$. We have shown that
\begin{equation} \label{pss=06}
\phi^{0,2}(I_n)=\psi^{0,2}(I_n)=0, \ \text{ for all } n\in \mathbb{Z}.
\end{equation}

\begin{claim}\label{claim 6}
$f(I_m,I_n)=0$ for all $m,n\in \mathbb{Z}$.
\end{claim}

From (\ref{pss=06}), the proof can be finished by a similar way to Claim \ref{claim 3}.

Finally, the proof of the proposition is completed by Claims \ref{claim 4}, \ref{claim 5} and \ref{claim 6}.
\end{proof}

\subsection{The case for $a,b$ with $a\notin \mathbb{Z}$  or  $a=0$, $b\notin \{0, 1,2\}$}

In this case, we shall prove the following result.

\begin{proposition}\label{pro3}
Suppose that $a\notin \mathbb{Z}$  or  $a=0$, $b\notin \{0, 1,2\}$. Let $f\in {\rm Bit}_+(\mathcal{W}(a,b))$. Then there is a sequence $\Omega=(\mu_k)_{  k\in \mathbb{Z}}$  which contains only finitely many nonzero entries such that
$
f(x,y)=\Delta_\Omega(x,y)
$
for all $x,y\in \mathcal{W}(a,b)$, where $\Delta_\Omega$ is a bilinear map on $\mathcal{W}(a,b)$ defined by
$$
\Delta_\Omega(L_m,L_n)=
\begin{cases}
\sum_{k\in \mathbb{Z}}(b(m+n)+k+a)\mu_k I_{m+n+k}, & a\notin \mathbb{Z}, \ \ b=0 \ \text{or} \ 1;\\
0, &otherwise,
\end{cases}
$$
and $\Delta_\Omega(L_m,I_n)=\Delta_\Omega(I_m,L_n)=\Delta_\Omega(I_m,I_n)=0$ for all $m,n\in \mathbb{Z}$.
\end{proposition}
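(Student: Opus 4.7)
The plan is to adapt the methods of Propositions \ref{pro1} and \ref{pro2} to this more uniform setting. Since $(a,b)\notin\{(0,0),(0,1),(0,2)\}$, Lemma \ref{innerLie} shows that the only outer derivation of $\mathcal{W}(a,b)$ is $D_1$, so the term $D_2^{a,b}$ in Lemmas \ref{referee} and \ref{ref} disappears. For a symmetric biderivation $f$, I proceed via three claims in order: (C1) $f(L_m,L_n)=\Delta_\Omega(L_m,L_n)$; (C2) $f(L_m,I_n)=f(I_n,L_m)=0$; (C3) $f(I_m,I_n)=0$.

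For (C1), equation \eqref{aa14} with $D_2^{a,b}=0$ reads $f(L_m,L_n)=[\phi^{a,b}(L_m),L_n]=[L_m,\psi^{a,b}(L_n)]$. Expanding via \eqref{ll1}--\eqref{ll2} and computing brackets, the two computations of the $L$-coefficient of $f(L_m,L_n)$ give $(i-n)a_i^{a,b}(m)=(2m-n-i)c_{n-m+i}^{a,b}(n)$, so Lemma \ref{refe} forces $a_i^{a,b}(m)=c_i^{a,b}(m)=\delta_{m,i}\lambda$. But $\lambda[L_m,L_n]$ is skew-symmetric and $f$ is symmetric, hence $\lambda=0$. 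Matching $I$-coefficients and combining with the symmetry relation $f(L_m,L_n)=f(L_n,L_m)$ yields $b_k^{a,b}(n)=-d_k^{a,b}(n)$ together with the key identity
\begin{equation*}
(m+k+a+bn)\,\beta_k(m)=(n+k+a+bm)\,\beta_k(n),
\end{equation*}
where $\beta_k(m):=b_{m+k}^{a,b}(m)$. Setting $n=0$ and solving (using $m+k+a\neq 0$ when $a\notin\mathbb{Z}$, and handling the borderline case $m=-k$ separately when $a=0$ by substituting $m=-k$ back into the key identity) gives $\beta_k(m)=\frac{k+a+bm}{m+k+a}\,\beta_k(0)$.

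Substituting back into the key identity produces the polynomial relation $(m+k+a+bn)(k+a+bm)(n+k+a)=(n+k+a+bm)(k+a+bn)(m+k+a)$, whose antisymmetric part expands to $b(1-b)\,mn(m-n)$. Thus the relation holds for all integers $m,n$ only if $b\in\{0,1\}$ or $\beta_k(0)=0$. Under the hypothesis of the proposition, $b\in\{0,1\}$ occurs only in the subcase $a\notin\mathbb{Z}$; there, writing $\mu_k:=\beta_k(0)$ and summing recovers $f(L_m,L_n)=\sum_k(b(m+n)+k+a)\mu_k I_{m+n+k}$, which matches $\Delta_\Omega$. In every other subcase $\beta_k\equiv 0$, so $f(L_m,L_n)=0$. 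The finite support of $\Omega$ reflects that $\phi^{a,b}(L_0)$ is a single vector with only finitely many nonzero coordinates.

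For (C2), apply \eqref{aa15} with $D_2^{a,b}=0$. Since $\phi^{a,b}(L_m)$ now has only $I$-components and $[I_i,I_n]=0$, it collapses to $\rho_1^{a,b}(L_m) I_n=[L_m,\psi^{a,b}(I_n)]$. Expanding $\psi^{a,b}(I_n)$ via \eqref{eee1}, the $L$-coefficients give $s_i^{a,b}(n)=0$; the $I$-coefficient at $I_{m+i}$ with $m+i\neq n$, after choosing $m$ so that $i+a+bm\neq 0$ (always possible when $a\notin\mathbb{Z}$, or when $a=0$ with $b\neq 0$), gives $r_i^{a,b}(n)=0$; the remaining case $i=n-m$ then forces $\rho_1^{a,b}(L_m)=0$. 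Hence $f(L_m,I_n)=0$, and $f(I_n,L_m)=0$ by symmetry. A parallel argument on $\phi^{a,b}(I_m)$, together with $\psi^{a,b}(I_n)=0$ from above, proves (C3) via \eqref{aa17}. The main obstacle is the polynomial-identity analysis in the third paragraph; the extra care for $a=0$ (because $m+k+a$ can vanish at $m=-k$) is essential but routine.
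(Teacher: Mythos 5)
Your proposal is correct and follows essentially the same route as the paper: reduce via Lemma \ref{referee} with $D_2^{a,b}=0$, kill the $L$-components of $\phi^{a,b}(L_m),\psi^{a,b}(L_m)$ using Lemma \ref{refe} and symmetry, compare $I$-coefficients to pin down $f(L_m,L_n)$, and then dispose of the $(L,I)$ and $(I,I)$ pairs exactly as in the paper's Claims 8 and 9. The only difference is organizational: you merge the paper's two separate subcases of Claim 7 ($a=0,\,b\notin\{0,1,2\}$ versus $a\notin\mathbb{Z}$) into one key identity whose antisymmetrization $b(1-b)mn(m-n)$ forces $b\in\{0,1\}$ or $\beta_k\equiv 0$, which is the same mechanism the paper applies case by case.
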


\begin{proof}
The proof will be completed by verifying the following three claims.
\begin{claim}\label{claim 7}
There is a sequence $\Omega=(\mu_k)_{  k\in \mathbb{Z}}$  which contains only finitely many nonzero entries such that
$$
f(L_m,L_n)=\begin{cases}
\sum_{k\in \mathbb{Z}}(b(m+n)+k+a)\mu_k I_{m+n+k}, & a\notin \mathbb{Z}, \ \ b=0 \ \text{or} \ 1;\\
0, &otherwise.
\end{cases}.
$$
\end{claim}
By Lemma \ref{referee}, the linear maps $\phi^{a,b}, \psi^{a,b}: \mathcal{W}(a,b)\rightarrow \mathcal{W}(a,b)$ satisfy (\ref{aa14}).
This, together with (\ref{ll1}) and (\ref{ll2}), yields that $f(L_m, L_n)$ is equal to
\begin{eqnarray}
& &\sum_{i\in \mathbb{Z}}(i-n)a_i^{a,b}{(m)}L_{n+i}+\sum_{i\in \mathbb{Z}}(i+a+bn)b_i^{a,b}{(m)}I_{n+i}\\
&=&\sum_{j\in \mathbb{Z}}(m-j)c_j^{a,b}{(n)}L_{m+j}-\sum_{j\in \mathbb{Z}}(j+a+bm)d_j^{a,b}{(n)}I_{m+j} \nonumber \\
&=&\sum_{i\in \mathbb{Z}}(2m-n-i)c_{n-m+i}^{a,b}{(n)}L_{n+i}\nonumber\\
  &&\ \ -\sum_{i\in \mathbb{Z}}(i+a+n+(b-1)m)d_{n-m+i}^{a,b}{(n)}I_{n+i}. \label{ll48}
\end{eqnarray}
Because $f$ is symmetric which implies  $f(L_m, L_n)= f(L_n, L_m)$, comparing both sides of the above equations, we obtain
\begin{eqnarray}
(i-n)a_i^{a,b}{(m)}&=&(n-i)c_i^{a,b}{(m)}\nonumber \\
&=&(2m-n-i)c_{n-m+i}^{a,b}{(n)}, \label{ac18}\\
(i+a+bn)b_i^{a,b}{(m)}&=&-(i+a+n+(b-1)m)d_{n-m+i}^{a,b}{(n)}\nonumber\\
&=&-(i+a+bn)d_i^{a,b}{(m)}. \label{ef118}
\end{eqnarray}

It is easy to see by Lemma \ref{refe} with (\ref{ac18}) that
\begin{equation}\label{tangxm0}
a_i^{a,b}{(m)}=c_i^{a,b}{(m)}=0, \ \text{ for all } m,i\in \mathbb{Z}.
\end{equation}
Next,  we shall finish the proof based on different cases of $a,b$.

{\it Case 1.} $a=0$ and $b\neq 0,1,2$. By (\ref{ef118}), $(i+bn)b_i^{0,b}{(m)}=-(i+bn)d_i^{0,b}{(m)}$, which implies from $b\neq 0$ that
$b_i^{0,b}{(m)}=-d_i^{0,b}{(m)}$  for any integers $m,i$.  By letting $m=0$ and $n=-i$ in (\ref{ef118}), we get $(b-1)nb_{-n}^{0,b}{(0)}=0,$
which from $b\neq 1$ gives that $b_{-n}^{0,b}{(0)}=0$ for any $n\neq 0$. Thus, $d_{-m+i}^{0,b}{(0)}=-b_{-m+i}^{0,b}{(0)}=0$ if $i\neq m$. This, together with (\ref{ef118}) by letting $n=0$, yields $ib_i^{0,b}{(m)}=0$ when $i\neq m$.  Therefore, we get $b_i^{0,b}{(m)}=0$ for all $i,m\in \mathbb{Z}$ with $i\neq 0,m$.
In particular, $d_{n-m}^{0,b}{(n)}=-b_{n-m}^{0,b}{(n)}=0$ if $n-m\neq 0, n$.  This, together with (\ref{ef118}) by letting $i=0$, yields $bnb_0^{0,b}{(m)}=0$
for all $n\neq 0,m$. The above discussion tells us that
\begin{equation}\label{tangxm1}
b_{i}^{0,b}{(m)}=d_{i}^{0,b}{(m)}=0, \ \text{ for all } i\neq m.
\end{equation}
Let $i=m$ in (\ref{ef118}), we get
\begin{equation}\label{tangxm2}
(m+bn)b_m^{0,b}{(m)}=(n+bm)d_{n}^{0,b}{(n)}, \ \text{ for all } m,n\in \mathbb{Z}.
\end{equation}
Take $n=0$ in the above equation, we obtain $mb_m^{0,b}{(m)}=bmb_{0}^{0,b}{(0)}$ for all $m\in \mathbb{Z}$. Denote $b_{0}^{0,b}{(0)}=\mu$, then
$b_m^{0,b}{(m)}=b\mu$ for all $m\in \mathbb{Z}$ with $m\neq 0$. From this, we have by (\ref{tangxm2}) that $b(b-1)(m-n)\mu=0$ for all $m,n\in \mathbb{Z}$ with
$m,n\neq 0$. It follows by $b\neq 0,1$ that $\mu=0$, and so $b_m^{0,b}{(m)}=0$ for all $m\in \mathbb{Z}$.  This, together with (\ref{tangxm1}), yields $b_{i}^{0,b}{(m)}=d_{i}^{0,b}{(m)}=0$ for all $i,m\in \mathbb{Z}$. With (\ref{tangxm0}), one can conclude by (\ref{aa14}) that
\begin{equation}\label{tangxm3}
\phi^{0,b}(L_n)=\psi^{0,b}(L_n)=0, \ \ \text{and} \  f(L_m, L_n)=0.
\end{equation}
This completes the proof of the claim in which $a=0$ and $b\neq 0,1,2$.

{\it Case 2.} $a\notin \mathbb{Z}$. In view of (\ref{ef118}), we see $(i+a)b_i^{a,b}{(m)}=-(i+a)d_i^{a,b}{(m)}$. Note that $i+a\neq 0$ since $a\notin \mathbb{Z}$, one has $b_i^{a,b}{(m)}=-d_i^{a,b}{(m)}$ for all $i\in \mathbb{Z}$. This, together with (\ref{ef118}), yields
\begin{eqnarray}
(i+a+bn)b_i^{a,b}{(m)}=(i+a+n+(b-1)m)b_{n-m+i}^{a,b}{(n)}.
\end{eqnarray}
Let $m=0$ in the above equation, then we have $(i+a) b_i^{a,b}{(m)}=(i+a+(b-1)m)t_{i-m}^{(0)}$. From this, by letting $k=i-m$ and $\mu_k=t_{k}^{(0)}\in \mathbb{C}$, we obtain $(m+k+a) b_{m+k}^{(m)}=(bm+k+a)\mu_k$, which implies
\begin{equation}\label{tangxm4}
 b_{m+k}^{(m)}=\frac{bm+k+a}{m+k+a}\mu_k=-d_{m+k}^{(m)}, \ \text{ for all } m,k\in \mathbb{Z}.
\end{equation}
Now, Equations  (\ref{tangxm0}), (\ref{tangxm4}) with (\ref{aa14}) yield that
\begin{eqnarray}
f(L_m,L_n)&=&\sum_{k\in \mathbb{Z}}(m+k+a+bn)\cdot \frac{bm+k+a}{m+k+a}\mu_k I_{n+m+k}\nonumber\\
&=&\sum_{k\in \mathbb{Z}} \left((b(m+n)+k+a)+\frac{b(b-1)mn}{m+k+a}\right) \mu_k I_{n+m+k}. \label{tangxm5}
\end{eqnarray}
Since $f(L_m,L_n)=f(L_m,L_n)$, so if there is some $\mu_k\neq 0$, then  by (\ref{tangxm5}) we get $b=0$ or $b=1$.
Note that the sequence $\Omega \doteq(\mu_k)_{  k\in \mathbb{Z}}$  which contains only finitely many nonzero entries for which the above equation makes sense.
This completes the proof of the claim for $a\notin \mathbb{Z}$. In addition, the above process also implies
\begin{equation} \label{ps=08}
\phi^{a,b}(L_n)=-\psi^{a,b}(L_n)=\sum_{k\in \mathbb{Z}} \frac{bm+k+a}{m+k+a}\mu_k I_{n+k}, \ \text{ for all } n\in \mathbb{Z}.
\end{equation}

\begin{claim}\label{claim 8}
$f(L_m,I_n)=0$ for all $m,n\in \mathbb{Z}$.
\end{claim}
By Lemma \ref{referee}, the linear maps $\phi^{a,b}, \psi^{a,b}: \mathcal{W}(a,b)\rightarrow \mathcal{W}(a,b)$ satisfy (\ref{aa15}) and (\ref{aa16}).
This, together with (\ref{dd10}), (\ref{eee1}), (\ref{ps=08}) or (\ref{tangxm3}),  yields that $f(L_m,I_n)$ and $f(I_n, L_m)$ are of the following forms respectively:
\begin{eqnarray}
\rho^{a,b}_1(L_m)I_n= \sum_{j\in \mathbb{Z}}(m-i)s_i^{a,b}{(n)}L_{m+i}-\sum_{i\in \mathbb{Z}}(i+a+bm)r_i^{a,b}{(n)}I_{m+i},\label{abc18} \\
\sum_{j\in \mathbb{Z}}(i-m)p_i^{a,b}{(n)}L_{i+m}+\sum_{i\in \mathbb{Z}}(i+a+bm)q_i^{a,b}{(n)}I_{m+i}=\theta^{a,b}_1(L_m)I_n.\label{abc28}
\end{eqnarray}
From (\ref{abc18}) we get $(m-i)s_i^{a,b}{(n)}=0$, which deduces $s_i^{a,b}{(n)}=0$ for all $i,n\in \mathbb{Z}$.
For any $m\neq n$, in view of (\ref{abc18}) in which $m=0$ we get $(i+a)r_i^{a,b}{(n)}=0$ for any $i\neq m$. This, together with $a\notin \mathbb{Z}$, yields $r_i^{a,b}{(n)}=0$ for any $n,i\in \mathbb{Z}$. Furthermore, it also follows that $\rho^{a,b}_1(L_m)=0$. Similarly, by (\ref{abc28}) one can obtain that
$p_i^{a,b}{(n)}=q_i^{a,b}{(n)}=\theta^{a,b}_1(L_n)=0$ for all $n,i\in \mathbb{Z}$. Therefore, we have shown that
\begin{equation}\label{tangxm8}
\phi^{a,b}(L_m)=\psi^{a,b}(I_m)=f(L_m,I_n)=0 \text{ for all } m,n\in \mathbb{Z}.
\end{equation}
The proof of the claim is completed.

\begin{claim}\label{claim 9}
$f(I_m,I_n)=0$ for all $m,n\in \mathbb{Z}$.
\end{claim}

Notice that (\ref{tangxm8}), the proof is similar to Claim \ref{claim 3}.

Finally, the proof of the proposition is completed by Claims \ref{claim 7}, \ref{claim 8} and \ref{claim 9}.
\end{proof}

Now we ready to give the proof of our main result.

{\bf The proof of Theorem \ref{posttheo}:}
The ``if" part is easy to verify, we now prove the ``only if" part.

Now we assume that $f$ is a biderivation of $\mathcal{W}(a,b)$. We shall finish the proof according to the cases of values allowed for $a,b$.

{\it Case 1.} $a\in \mathbb{Z}, b=0$.

By \cite{tangli2017} we know that if $f\in {\rm Bid}(\mathcal{W}(0,0))$, then there exist $\lambda \in \mathbb{C}$ and a sequence $\Omega=(\mu_k)_{  k\in \mathbb{Z}}$  which contains only finitely many nonzero entries such that $f=\lambda[x,y]+\Psi_\Omega(x,y)$ for all $x,y\in \mathcal{W}(0,0)$, where $\Psi_\Omega$ is given by Definition \ref{taa}.
Let $\sigma:\mathcal{W}(0,0)\rightarrow \mathcal{W}(a,0)$ be a linear map determined by $\sigma(L_m)= L_m$, $\sigma(I_m)=I_{m-a}$. Then by Lemma \ref{iso}, $\sigma$ is an isomorphism of Lie algebras.  Let  $f^\sigma$ be a linear map from $\mathcal{W}(a,0)$ into itself  determined by $f^\sigma(\sigma(x),\sigma(y))=\sigma(f(x,y))$ for all $x,y\in \mathcal{W}(a,0)$. Thanks to Lemma \ref{auto}, any biderivation of $\mathcal{W}(a,0)$ must be
of the form $f^\sigma$ which satisfies
\begin{eqnarray*}
f^\sigma (L_m,L_n)&=&f^\sigma(\sigma(L_m),\sigma(L_n)) \\
&=&\sigma(f(L_m,L_n))\\
&=&\lambda[L_m,L_n]+\sum_{k \in \mathbb{Z}}(m+n+k)\mu_k I_{m+n+k-a}\\
&=&\lambda[L_m,L_n]+\sum_{t \in \mathbb{Z}}(m+n+t+a)\mu_{t+a} I_{m+n+t}\\
&=& \lambda[L_m,L_n]+\Upsilon^a_{\Omega^\prime}(L_m,L_n),
\end{eqnarray*}
where $\Omega^\prime=(\mu_k^\prime)_{k\in \mathbb{Z}}$ with $\mu_k^\prime=\mu_{k+a}$; and obviously
$f^\sigma (L_m,I_n)=\lambda[L_m,I_n]$, $f^\sigma (I_n,L_m)=\lambda[I_n,L_m]$, $f^\sigma (I_m,I_n)=0$.
Hence, we have $f^\sigma(x,y)=\lambda [x,y]+\Psi_{\Omega^\prime}(x,y)$ for all $x,y\in \mathcal{W}(a,0)$.

{\it Case 2.} $a\notin \mathbb{Z}, b=0$.

By Lemma \ref{zhao}, ${\rm Bid}(\mathcal{W}(a,0) )={\rm Bid}_-(\mathcal{W}(a,0)) \oplus {\rm Bid}_+(\mathcal{W}(a,0)).$
By \cite{Hanw}, any $f_-\in {\rm Bid}_-(\mathcal{W}(a,0))$ has the form as $f_-(x,y)=\lambda [x,y]$ for all $x,y\in \mathcal{W}(a,0)$, where $\lambda\in \mathbb{C}$.
 Let $f_+\in {\rm Bid}_+(\mathcal{W}(a,0))$. Then by Proposition \ref{pro3}, there is a sequence $\Omega=(\mu_k)_{  k\in \mathbb{Z}}$  which contains only finitely many nonzero entries such that $f_+(x,y)=\Delta_\Omega(x,y)$ for all $x,y\in \mathcal{W}(a,0)$, where $\Delta_\Omega$ is given by Proposition \ref{pro3}. Therefore, we have $f_+(L_m,I_n)=f_+(I_m,L_n)=f_+(I_m,I_n)=0$ and
$$
f_+(L_m,L_n)= \sum_{k\in \mathbb{Z}}(k+a)\mu_k I_{m+n+k}=\sum_{k\in \mathbb{Z}}\mu^\prime_k I_{m+n+k}
$$
for all $m,n\in \mathbb{Z}$, where $\mu_k^\prime=(k+a)\mu_k.$ Let $\Omega^\prime=(\mu^\prime_k)_{k\in \mathbb{Z}}$, we see that $f_+=\Psi_{\Omega^\prime}$.
Hence, in this case any biderivation $f$ of $\mathcal{W}(a,0)$ is of the form
$$
f(x,y)=f_-(x,y)+f_+(x,y)=\lambda [x,y]+\Psi_{\Omega^\prime}(x,y), \text{ for all } x,y\in \mathcal{W}(a,0).
$$

{\it Case 3.} $a\in \mathbb{Z}, b=1$.

By Proposition \ref{pro1}, we know that if $f_+\in {\rm Bid}_+(\mathcal{W}(0,1))$ then
there is a sequence $\Omega=(\mu_k)_{  k\in \mathbb{Z}}$  which contains only finitely many nonzero entries such that
$f_+(x,y)=\Upsilon_\Omega^{0}(x,y)$ for all $x,y\in \mathcal{W}(0,1)$, where $\Upsilon_\Omega^0$ is given by Definition \ref{taa}.
Let $\sigma:\mathcal{W}(0,1)\rightarrow \mathcal{W}(a,1)$ be a linear map determined by $\sigma(L_m)= L_m$, $\sigma(I_m)=I_{m-a}$. Then by Lemma \ref{iso}, $\sigma$ is an isomorphism of Lie algebras.  Let  $f_+^\sigma$ be a linear map from $\mathcal{W}(a,1)$ into itself  determined by $f_+^\sigma(\sigma(x),\sigma(y))=\sigma(f_+(x,y))$ for all $x,y\in \mathcal{W}(a,1)$. Thanks to Lemma \ref{auto}, any symmetric biderivation of $\mathcal{W}(a,0)$ must be
of the form $f_+^\sigma$ which is given by
\begin{eqnarray*}
f_+^\sigma (L_m,L_n)&=&f_+^\sigma(\sigma(L_m),\sigma(L_n))\\
&=&\sigma(f_+(L_m,L_n))\\
&=& \sum_{k \in \mathbb{Z}}(m+n+k)\mu_k I_{m+n+k-a}\\
&=& \sum_{t \in \mathbb{Z}}(m+n+t+a)\mu_{t+a} I_{m+n+t}\\
&=&\Upsilon^a_{\Omega^\prime}(L_m,L_n),
\end{eqnarray*}
where $\Omega^\prime=\{\mu_k^\prime=\mu_{k+a}|k\in \mathbb{Z}\}$; and obviously $f_+^\sigma (L_m,I_n)=f_+^\sigma (I_n,L_m)=f_+^\sigma (I_m,I_n)=0$.
On the other hand, if $f_-\in {\rm Bid}_-(\mathcal{W}(a,1))$, then by \cite{Hanw} we see that $f_-(x,y)=\lambda [x, y]$ for some $\lambda\in \mathbb{C}$. Now, by Lemma \ref{zhao} we conclude that any biderivation of $\mathcal{W}(a,1)$ is of the form
$$
f_-(x,y)+f_+^\sigma(x,y)=\lambda [x, y]+\Upsilon^a_{\Omega^\prime}(x,y),\ \text{for all } x,y\in \mathcal{W}(a,1).
$$

{\it Case 4.} $a\notin \mathbb{Z}, b=1$.

For any  $f_-\in {\rm Bid}_-(\mathcal{W}(a,1))$, by \cite{Hanw} one has $f_-(x,y)=\lambda [x, y]$ for some $\lambda\in \mathbb{C}$.
On the other hand, if $f_+\in {\rm Bid}_+(\mathcal{W}(a,1))$ then by Proposition \ref{pro3}, there is a sequence $\Omega=(\mu_k)_{  k\in \mathbb{Z}}$  which contains only finitely many nonzero entries such that $f_+(x,y)=\Delta_\Omega(x,y)$ for all $x,y\in \mathcal{W}(a,1)$, where $\Delta_\Omega$ is given by Proposition \ref{pro3}. Therefore, we have $f_+(L_m,I_n)=f_+(I_m,L_n)=f_+(I_m,I_n)=0$ and
$$
f_+(L_m,L_n)= \sum_{k\in \mathbb{Z}}(m+n+k+a)\mu_k I_{m+n+k}=\Psi_\Omega(L_m,L_n)
$$
for all $m,n\in \mathbb{Z}$. Then we see that $f_+=\Psi_{\Omega}$.
Thanks to Lemma \ref{zhao}, we deduce that any biderivation of $\mathcal{W}(a,1)$ is of the form
$
f_-(x,y)+f_+(x,y)=\lambda [x, y]+\Psi_{\Omega}(x,y)
$
for all $x,y\in \mathcal{W}(a,1)$.

{\it Case 5.} $a\in \mathbb{Z}, b=-1$.

By \cite{tang2017}, we see that if $f\in {\rm Bid}(\mathcal{W}(0,-1))$, then there exist $\lambda,\mu \in \mathbb{C}$ such that
$f=\lambda[x,y]+\Theta^0_\mu(x,y)$ for all $x,y\in \mathcal{W}(0,-1)$, where $\Theta^0_\mu$ is given by Definition \ref{taa}.
Let $\sigma:\mathcal{W}(0,-1)\rightarrow \mathcal{W}(a,-1)$ be a linear map determined by $\sigma(L_m)= L_m$, $\sigma(I_m)=I_{m-a}$. Then by Lemma \ref{iso}, $\sigma$ is an isomorphism of Lie algebras.  Let  $f^\sigma$ be a linear map from $\mathcal{W}(a,-1)$ into itself  determined by $f^\sigma(\sigma(x),\sigma(y))=\sigma(f(x,y))$ for all $x,y\in \mathcal{W}(a,-1)$. Thanks to Lemma \ref{auto}, any biderivation of $\mathcal{W}(a,-1)$ must be
of the form $f^\sigma$ which satisfies
\begin{eqnarray*}
f^\sigma (L_m,L_n)&=&f^\sigma(\sigma(L_m),\sigma(L_n))\\
&=&\sigma(f(L_m,L_n))\\
&=&\sigma(\lambda[L_m,L_n]+\Theta^0_\mu(L_m,L_n))\\
&=& \lambda[L_m,L_n]+\sum_{k \in \mathbb{Z}}\mu_k \sigma(I_{m+n})\\
&=& \lambda[L_m,L_n]+\sum_{k \in \mathbb{Z}}\mu_k I_{m+n-a}\\
&=&\lambda[L_m,L_n]+\Theta^a_\mu(L_m,L_n);
\end{eqnarray*}
and obviously
$f^\sigma (L_m,I_n)=\lambda[L_m,I_n], f^\sigma (I_n,L_m)=\lambda[I_n,L_m]$, $f^\sigma (I_m,I_n)=0$.
Hence, we have $f^\sigma(x,y)=\lambda [x,y]+\Theta^a_\mu(x,y)$ for all $x,y\in \mathcal{W}(a,-1)$.

{\it Case 6.} (i) $a\in \mathbb{Z}, b=2$; (ii) $a\in \mathbb{Z}, b\notin \{-1, 0, 1, 2\}$ and (iii) $a\notin \mathbb{Z}, b\notin\{0,1\}$.

Proposition \ref{pro2} tells us that if $f_+\in {\rm Bid}_+(\mathcal{W}(0,2))$ then $f_+=0$.  In a similar way to the proof of Case 1, by Lemmas \ref{iso} and \ref{auto} we conclude that any $f_+^\sigma\in {\rm Bid}_+(\mathcal{W}(a,2))$ has to be $0$ for all $a\in \mathbb{Z}$. In view of Proposition \ref{pro3}, we see that whatever $a\in \mathbb{Z}, b\notin \{-1, 0, 1, 2\}$ or $a\notin \mathbb{Z}, b\notin\{0,1\}$, any $f_+\in {\rm Bid}_+(\mathcal{W}(a,b))$ has to be $0$.
This indicated that ${\rm Bid}_+(\mathcal{W}(a,b))$ contains only zero biderivation in all cases (i),(ii) and (iii). On the other hand, it is easy to see by \cite{Hanw} that ${\rm Bid}_-(\mathcal{W}(a,b))$ contains only inner biderivation in these cases. Therefore, by Lemma \ref{zhao} we deduce that any biderivation $f$ of $\mathcal{W}(a,b)$ must be inner in all cases (i),(ii) and (iii).

Now, we summarize Cases 1-6 and complete the proof of Theorem \ref{posttheo}.

\section{Post-Lie algebra structures on $\mathcal{W}(a,b)$}\label{sec4}

Recall that the post-Lie algebras have been introduced by Valette in connection with the homology of partition
posets and the study of Koszul operads \cite{vela}. As \cite{Burde1} pointed out, post-Lie algebras are natural common generalization of pre-Lie algebras  and LR-algebras in the geometric context of nil-affine actions of Lie groups. Recently, many authors study some post-Lie algebras and post-Lie algebra structures  \cite{Burde2,Burde1,Mun,pan,tang2014}. In particular, the authors \cite{Burde1} study the commutative post-Lie algebra structure on Lie algebra.  By using our results, we can characterize the  commutative post-Lie algebra structure on $\mathcal{W}(a,b)$.
Let us recall the following definition of commutative post-Lie algebra.

\begin{definition}\label{post} \cite{tang2017}
Let $(L, [, ])$ be a complex Lie algebra. A commutative post-Lie algebra structure on $L$ is a
$\mathbb{C}$-bilinear product $x\circ y$ on $L$ satisfying the following identities:
\begin{eqnarray*}
&& x \circ y = y\circ x, \nonumber\\
&& [x, y] \circ z =  x \circ (y \circ z)-y \circ (x \circ z), \label{post6}\\
&& x\circ [y, z] = [x\circ y, z] + [y, x \circ z] \nonumber
\end{eqnarray*}
for all $x, y, z \in L$. We also say $(L, [, ], \circ)$ a commutative post-Lie algebra.
\end{definition}

A post-Lie algebra $(L, [, ], \circ)$ is said to be trivial if $x\circ y=0$ for all $x,y\in L$.
The following lemma shows the connection between commutative post-Lie algebra and biderivation of a Lie algebra.

\begin{lemma}\label{postbide} \cite{tang2017}
Suppose that $(L, [, ], \circ)$ is a commutative post-Lie algebra. If we define a bilinear map $f : L\times L \rightarrow L$ by $f(x,y)=x\circ y$ for all $x,y\in L$, then $f$ is a biderivation of $L$.
\end{lemma}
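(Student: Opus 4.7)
The plan is to verify directly the two defining identities of a biderivation by unpacking the commutative post-Lie axioms. The crucial observation is that the bilinear map $f(x,y)=x\circ y$ inherits symmetry from the commutativity axiom, and this symmetry will allow us to derive one biderivation identity from the other by relabeling.

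First, I would tackle the second biderivation identity, namely $f(x,[y,z])=[f(x,y),z]+[y,f(x,z)]$. Substituting the definition of $f$, this reads
\begin{eqnarray*}
x\circ[y,z]=[x\circ y,z]+[y,x\circ z],
\end{eqnarray*}
which is exactly the third defining axiom of a commutative post-Lie algebra in Definition \ref{post}. So this identity requires no work at all.

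Next, I would handle the first biderivation identity, $f([x,y],z)=[x,f(y,z)]+[f(x,z),y]$. The idea is to reduce it to the previous case via the symmetry $f(u,v)=f(v,u)$ coming from commutativity. Specifically, rewrite
\begin{eqnarray*}
f([x,y],z)=[x,y]\circ z = z\circ[x,y],
\end{eqnarray*}
then invoke the third post-Lie axiom, applied with $x,y,z$ replaced by $z,x,y$, to obtain $[z\circ x,y]+[x,z\circ y]$. Applying commutativity once more converts this into $[f(x,z),y]+[x,f(y,z)]$, as required.

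I do not anticipate any serious obstacle: the argument is a two-line calculation that consists of substituting the definition of $f$, using commutativity to reverse the order of the operands, and invoking the third post-Lie axiom. It is worth noting that the second post-Lie axiom, $[x,y]\circ z=x\circ(y\circ z)-y\circ(x\circ z)$, plays no role in this lemma; that axiom encodes the genuinely new post-Lie structure rather than a derivation condition, so it is natural that the biderivation property is controlled entirely by commutativity together with the third axiom.
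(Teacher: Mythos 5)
Your proof is correct: the third post-Lie axiom gives the derivation property in the second argument verbatim, and commutativity transfers it to the first argument exactly as you compute (and indeed the axiom $[x,y]\circ z=x\circ(y\circ z)-y\circ(x\circ z)$ is never needed). The paper itself states this lemma by citation to \cite{tang2017} without reproducing a proof, and your direct verification is the standard argument used there, so there is nothing further to compare.
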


\begin{theorem}\label{refreesttt}
Any commutative post-Lie algebra structure on the algebra $\mathcal{W}(a,b)$ is trivial.
\end{theorem}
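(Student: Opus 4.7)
The plan is to push everything through Lemma \ref{postbide} and Theorem \ref{posttheo}. Given a commutative post-Lie structure $\circ$ on $\mathcal{W}(a,b)$, set $f(x,y)=x\circ y$. By Lemma \ref{postbide}, $f$ is a biderivation, and commutativity of $\circ$ forces $f\in \mathrm{Bid}_+(\mathcal{W}(a,b))$. Applying Theorem \ref{posttheo}, $f$ must belong to one of four explicit families. Since $\mathcal{W}(a,b)$ is non-abelian, the inner biderivation $\lambda[x,y]$ is symmetric only when $\lambda=0$; likewise, the family $\Theta^a_\mu$ is skew-symmetric by construction, so symmetry forces $\mu=0$. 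This reduces the problem to two surviving possibilities: $f=\Psi_\Omega$ when $b=0$, and $f=\Upsilon^a_\Omega$ when $b=1$; in all other cases $f=0$ and we are done.

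For the two remaining cases, the strategy is to plug $x=L_m$, $y=L_n$, $z=L_p$ into the associator identity
\[
[x,y]\circ z = x\circ(y\circ z) - y\circ(x\circ z).
\]
A key observation from Definition \ref{taa} is that both $\Psi_\Omega$ and $\Upsilon^a_\Omega$ vanish whenever one argument lies in $\mathrm{span}\{I_j\}$. Consequently $L_n\circ L_p$ is a linear combination of $I_j$'s, and then $L_m\circ(L_n\circ L_p)=0$; the entire right-hand side collapses to zero. On the left-hand side, $[L_m,L_n]=(m-n)L_{m+n}$, so the identity reduces to
\[
(m-n)\,f(L_{m+n},L_p)=0 \quad \text{for all } m,n,p\in\mathbb{Z}.
\]
Choosing $m\neq n$ and letting $m+n$ range over $\mathbb{Z}$, this forces $f(L_q,L_p)=0$ for every $q,p\in\mathbb{Z}$.

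Unwinding what that means in each case finishes the argument. For $f=\Psi_\Omega$, it gives $\sum_k \mu_k I_{q+p+k}=0$ for all $q,p$, whence every $\mu_k=0$ and $\Omega$ is the zero sequence. For $f=\Upsilon^a_\Omega$, it gives $\sum_k (q+p+k+a)\mu_k I_{q+p+k}=0$; comparing coefficients of $I_{q+p+k}$ and varying $p$ so that $q+p+k+a\neq 0$, each $\mu_k$ must vanish as well. Therefore $f\equiv 0$, i.e.\ the post-Lie product $\circ$ is trivial in every case.

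The only mild obstacle is the case bookkeeping inherited from Theorem \ref{posttheo}, but once the symmetry of $\circ$ kills the skew-symmetric families $\lambda[\cdot,\cdot]$ and $\Theta^a_\mu$, the associator test against triples $(L_m,L_n,L_p)$ is a one-line computation that disposes of the remaining symmetric families at once.
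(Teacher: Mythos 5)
Your proposal is correct and follows essentially the same route as the paper: reduce to the biderivation classification via Lemma \ref{postbide} and Theorem \ref{posttheo}, use commutativity of $\circ$ to kill the inner part and $\Theta^a_\mu$, and then test the identity $[x,y]\circ z = x\circ(y\circ z)-y\circ(x\circ z)$ on triples $(L_m,L_n,L_p)$ to force $\Omega=0$. Your explicit computation $(m-n)f(L_{m+n},L_p)=0$ merely spells out the step the paper leaves as ``it is easy to find some $m,n,t$,'' so the two arguments coincide in substance.
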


\begin{proof}
Suppose that $(\mathcal{W}(a,b), [, ], \circ)$ is a commutative post-Lie algebra. By Lemma \ref{postbide} and Theorem  \ref{posttheo}, we know that there are $\lambda,\mu \in \mathbb{C}$ and a sequence $\Omega=(\mu_k)_{  k\in \mathbb{Z}}$  which contains only finitely many nonzero entries such that
$$
x\circ y=\begin{cases}
\lambda[x,y]+\Psi_\Omega(x,y), & \text{if} \ b=0, \\
\lambda[x,y]+\Upsilon^a_\Omega(x,y), & \text{if} \ b=1, \\
\lambda[x,y]+\Theta^a_\mu(x,y), & \text{if} \ a\in \mathbb{Z}, b=-1, \\
\lambda[x,y], & \text{otherwise}. \\
\end{cases}
$$ for all $x,y\in \mathcal{H}$, where $\Psi_\Omega,\Upsilon^a_\Omega$ and $\Theta^a_\mu$ are given by Definition \ref{taa}. Because the product $\circ$ is commutative, we have by $L_1\ \circ L_2=L_1\ \circ L_2$ that $\lambda=\mu=0$.  By (\ref{post6}), we see that
$$
[L_m,L_n]\circ L_t=L_m\circ(L_n\circ L_t)-L_n\circ(L_m\circ L_t)
$$
for all $m,n,t\in \mathbb{Z}$. If there is $\mu_k\in \Omega$ such that $\mu_k\neq 0$, then it is easy to find some $m,n,t\in \mathbb{Z}$ such that the left-hand side of the above equation contains at least a nonzero item, whereas the right-hand side is equal to zero, which is a contradiction. Thus, we have $\mu_i=0$ for any $i\in \mathbb{Z}$. In other words, $x\circ y=0$ for all $x,y\in \mathcal{W}(a,b)$.
\end{proof}

\section*{ Acknowledgements}
The author  thanks  Dr.  C. Xia for sending the paper \cite{Hanw}, and thanks
Prof. K. Zhao for some suggestions to revise the paper. The author was partly supported  by NSF of China (grant no. 11771069) and NSF of Heilongjiang Province (Grant A2015007).

\end{document}